\documentclass{aims1}
\usepackage{amsmath}
\usepackage{alltt}
\usepackage{exscale}
\usepackage{amsfonts}
\usepackage{array}
\usepackage{latexsym}
\usepackage{enumerate}
\usepackage{graphicx}
\usepackage{tikz}
\usepackage{color}
  \usepackage{paralist}
  \usepackage[T1]{fontenc}
  \usepackage{mathtools}
  \usepackage{graphics} 
  \usepackage{epsfig} 
\usepackage{graphicx} 
 \usepackage{epstopdf}
 \usepackage[colorlinks=true]{hyperref}
   
   \usepackage[english]{babel}
\usepackage[latin1]{inputenc}
\usepackage{amsthm,amsfonts,amssymb}
\usepackage{pgf,pgfarrows,pgfautomata,pgfheaps,pgfnodes,pgfshade}
\usepackage{times}
\hypersetup{urlcolor=blue, citecolor=red}

  \textheight=8.2 true in
   \textwidth=6.7 true in
    \topmargin 30pt


\newtheorem{theorem}{Theorem}[section]

\theoremstyle{definition}
\newtheorem{definition}[theorem]{Definition}
\newtheorem{remark}{Remark}

\usepackage{mathtools}
\newcommand{\fracj}{\displaystyle \frac}

\newcommand{\limj}{\displaystyle \lim}

\newcommand{\intj}{\displaystyle \int}

\newcommand{\minj}{\displaystyle \min}

\newcommand{\beq}{\begin{equation}}
\newcommand{\eeq}{\end{equation}}

\def\a#1{{\mathbb #1}}
%
\title[A strategy to reduce variations in an exact control] 
      {Total variations reduction for  an exact control applied to a dynamical mechanical system}

\author[]{Philippe Destuynder and Erwan Liberge}
\date{09-23-2024}
\subjclass{Primary: 35C07, 65M15; Secondary: 35M12, 65T60.}



\begin{document}

\maketitle
%
%
{\footnotesize
   \centerline{LaSIE - UMR CNRS 7356, La Rochelle University}
   \centerline{  Avenue Michel Cr\'epeau, La Rochelle, 17000, France}
   \centerline{ \email{philippe.destuynder@univ-lr.fr} and \email{erwan.liberge@univ-lr.fr}}

} 


\maketitle
\begin{abstract} In an optimal control strategy, an important point is to define the cost of the control. Usually it is added to the control criterion and multiplied by a small coefficient denoted by $\varepsilon$ which is known as the marginal cost of the control. The key idea of this paper, is to introduce a smoothing term in the control cost which aims at reducing the quantity of energy spent and reducing the oscillations of the control. Then using a so-called asymptotic control based on the smallness of $\varepsilon$, we construct an exact control which can be implemented in a close loop.  The energy involved in the control depends mainly on the variation of the control. Therefore it seems natural to include this quantity (the total variations) in the criterion involved in the optimal control. This can be done approximately by introducing the $L^1$ norm of the first order derivative of the control. The control strategy that we develop in this paper can be applied to such linear models. One important and new point is that we focus on exact control strategies for a non-differentiable criterion because of the cost of the control. Following the ideas of Tykhonov regularization method, it is proved using the so-called asymptotic method based on the smallness of the marginal cost of the control, that the exact control suggested is the one which represents the minimum of the marginal cost among exact controls. Furthermore, and it is the main technical point, it can reduce the variations of the control with an adequate tuning of the various parameters of the control loop.
We test the method on three examples. The first one concerns an elementary case just to show that the algorithm works on a reference situation. The second one is the control of flight of a flying boat with foils. We describe in the appendix the mechanical model which can be unstable because of a negative damping. The third example concerns the America's cup boat Luna Rossa.
\end{abstract}
 \keywords{\textbf{keywords:} hydrodynamics of foils, exact control, constrained control, optimal control.}
%
%
\section{Introduction} There are a lot of contributions for control algorithms applied to dynamics of mechanical devices. But most of them does not focus on the control function. A large number of them makes use of the optimal control strategy and consider a deterministic approach. The control is mostly a square integrable function versus the time variable and can be discontinuous (bang-bang when the control time is small and the amplitude restricted). Obviously this approach is only well founded  if an exact controllability is satisfied for the state variables implied in the control criterion.\\ In our formulation there are two important differences: \\ 1 first of all we introduce a norm of  the total variation of the control; \\ 2 secondly we make tend to zero the marginal cost of the control in order to obtain an exact control with a minimal cost among the exact controls as far as the exact controllability is satisfied. In fact this is similar to Tykhonov regularization method \cite{TYKHONOV1}-\cite{TYKHONOV2}.
\\
The main difficulty when one introduces a term which takes into account the total variation (even approximately as we do) is that the derivability of the criterion is lost. The difficulty is similar to the minimization of the energy for non Newtonian fluid. This is the case of most gels like the ketchup or tooth pastes. Roland Glowinski  \cite{GLOW} developed fifty years ago a special strategy for solving these models. It is based on a duality method which enables to transform most of second type variational inequalities (following the terminology of G. Stampacchia \cite{STAMP}) into a derivable optimization problem but with constraints on the variables. The Glowinski method has already being applied to image processing in several publications and the algorithm and the numerical results obtained by Hela Sellami \cite{SELLAMI} confirmed the efficiency of the extension to image processing. This is what we do in this text but this time for an exact  control problem.\\
 In a first part we formulate the general mechanical model to be controlled. The second part is devoted to the control algorithm. The three following sections contain numerical examples. 
First of all we test a very simple mechanical system with two masses and two springs. Then we discuss the algorithm for  a flying sailing boat that we have invented and finally we apply the algorithm to the America's cup boat Luna Rossa \cite{LUNA}. For this last example we took the data given in the article.
\section{The dynamical model to be controlled}  We consider a mechanical sytem in dynamics with $N\geq 1$ degrees of freedom.  The movement is parametrized by the vector $X(t)\in \a R^N$ and its time derivative (the velocity) is $\dot X(t)$. The acceleration is denoted by $\ddot X(t)$ (we are working in a Galilean frame). We introduce four $N\times N$ matrices in order to define the dynamical model. One denoted by ${\mathcal M}$, is the inertia, the second one ${\mathcal C}$ is the physical damping (not necessarily positive), the third one -say ${\mathcal K}$- is the stiffness and the last one ${\mathcal B}$ is the control tuner. The movements (displacements and rotations) are expressed in a Galilean system of axis moving at constant velocity -say $V$ in the direction $-x_1$ for instance. 
 The vector of the degrees of freedom is $X\in \a R^N$. It is a function of time. 
  The one representing the control is $u\in \a R^p,\;p\leq N$,  
 and the external perturbations applied to the system are represented by the vector ${\mathcal F}\in \a R^N$  
 which components can also depend on time. There are also initial condition on $X(0)=X_0\in \a R^N$ and $\dot X(0)=X_1\in \a R^N$.\\
 The model that we consider is linear but it is derived from a non linear one by a linearization, mainly because of the dependance of the external forces on $X$ and $\dot X$. Several instabilities can occur like a negative stiffness (catastrophic), a multiplicity of eigenfrequencies (flutter) or a buffting if one consider that the stiffness can depend on time for instance. The velocity acts on the damping matrix and a negative damping can occur (similar to a stall flutter phenomenon). Hence many improvements could be discuss on this very simple but nevertheless realistic model. In the examples that we treat in the following only damping instabilities occurs (case of the flying boats). 
 Then the dynamical model for a control  $\delta$ consists in solving the following ordinary second order differential equation which can be solved numerically using for instance a Newmark time steps scheme \cite{NEWMARK}:
 \\\\
 \beq\label{eq110}{\mathcal M}\ddot X+{\mathcal C}\dot X+{\mathcal K}X={\mathcal F}+{\mathcal B}u,\;X(0)=X_0,\;\;\dot X(0)=X_1.\eeq
 \\
 \begin{remark}\label{rem1}There is another possibility for solving (\ref{eq110}) which can be particularly interesting for a real time implementation. We introduce the extended vector of $\a R^{2N}$:\\
 \beq\label{eq120}Z=\left (\begin{array}{l} \dot X \\\\ X\end{array}\right )\;\;Z_0=Z(0)=\left (\begin{array}{l} X_1 \\\\ X_0\end{array}\right )\eeq
 \\
 and a new matrix ${\mathcal A}$ and a vector ${\mathcal Q}$ by:
 \\
 \beq\label{eq130}\begin{array}{cc}{\mathcal A}=\left (\begin{array}{l}\begin{array}{cc}0&\;\;\;\;\;\;\;\;\;\;\;\;\;\;\;\;I_d\end{array}\\\\\begin{array}{cc}-\hskip0.02cm {\mathcal M}^{-1}{\mathcal C}&-\hskip0.02cm {\mathcal M}^{-1}{\mathcal K}\end{array}\end{array}\right )&{\mathcal Q}=\left (\begin{array}{l}\;\;\;\;0\\\\ {\mathcal M}^{-1}{\mathcal F}+{\mathcal M}^{-1}{\mathcal B}u\end{array}\right )\end{array}
 \eeq
 \\
 The model (\ref{eq110}) is equivalent to the following one:\\
 \beq \label{eq140}\dot Z={\mathcal A}Z+{\mathcal Q},\;\;Z(0)=Z_0.\eeq
 The reduction of the matrix ${\mathcal A}$ enables to simplify the initial model. Setting ($S$ is the change of basis matrix for the diagonalisation  of the matrix ${\mathcal A}$ or its Jordan reduction if there are multiple eigenvalues):\\  \\$Y=SZ$ and $\Lambda=S{\mathcal A}S^{-1}$ which is a diagonal  (or Jordan) matrix,  \\\\the equation (\ref{eq110}) can also be written (${\mathcal U}=S{\mathcal Q}$):
 \beq\label{eq160}\dot Y=\Lambda Y+{\mathcal U},\;\;Y(0)=SZ_0.\eeq
 Each line (or group of lines associated to repeated eigenvalues) of this equation can be solved quasi-analytically but an integral approximation is still required  for the right-hand side.\hfill $\Box$
  \end{remark}
\section{The control problem} In this section we start with the optimal control strategy and then we focus on a solution method. Finally we introduce an asymptotic method versus the small parameter $\varepsilon$ which is the marginal cost of the control.  This enables to construct an exact control.
\subsection{Classical Optimal control strategy} Let us first recall the basic formulation of optimal control applied to a a linear second order system as the one described in the previous section.  The state function denoted by $X$ is a function of time on the interval $[0,T]$. The sollicitation of the system is represented by initial conditions $(X_0,X_1)\in \a R^{2N}$ and an external load ${\mathcal F}\in L^2(]0,T[; \a R^N)$. The state equation is written as follows where ${\mathcal M,C,K}$ are $N\times N$ matrices, ${\mathcal M}$ being symmetrical positive definite  and ${\mathcal B}$ a rectangular $N\times p$ matrix (where $p$ is the number of control functions):
\beq\label{eq150}\left \{ \begin{array}{l}\hbox{Find }X(t)\in \a R^N\hbox{ such that:}\\\\
{\mathcal M}\ddot X+{\mathcal C}\dot X+{\mathcal K}X={\mathcal F}+{\mathcal B} u,\;X(0)=X_0,\;\dot X(0)=X_1,
\end{array}\right.\eeq
For any $(\varepsilon,\alpha,\beta,\gamma)\in \a R^{+,*}\times \a R^{+}\times \a R^{+,*}\times \a R^{+,*}$  we define a control criterion function of a control $v\in [H^1_0(]0,T[)]^p$ by ($\vert.\vert_{\a R^m}$ is the norm in $\a R^m$ and 
$(.,.)$ is the scalar product):
\beq\label{eq170}J^{\varepsilon,\alpha,\beta,\gamma}(v)\hskip-.05cm=\hskip-.05cm\fracj{1}{2}{\Big (}\vert X(T)\vert_{\a R^{N}}^2\hskip-.05cm+\hskip-.05cm\vert \dot X(T)\vert_{\a R^N}^2\hskip-.05cm+\hskip-.1cm\varepsilon\int_0^T \hskip-.1cm{\Big [}\alpha\vert v \vert_{\a R^p}^2\hskip-.05cm+\hskip-.05cm\beta\vert \fracj{dv}{dt}\vert_{\a R^p}^2\hskip-.05cm+\hskip-.05cm2\gamma\vert \fracj{dv}{dt}\vert_{\a R^p} {\Big ]}{\Big )}.\eeq
The optimal control consists in minimizing $J^{\varepsilon,\alpha,\beta,\gamma}(v)$ versus $v\in {\Big[}H^1_0(]0,T[){\Big]}^p$.\\\\
The existence and uniqueness of a solution are quite classical as far as this is a classical optimization problem with a {\it coercive}, continuous and strictly convex functional to be minimized in a reflexive space: (${\Big [}H^1_0(]0,T[){\Big ]}^p$).\\\\
Unfortunately, the gradient of $J^{\varepsilon,\alpha,\beta,\gamma}$ does not exist if $\gamma\neq 0$.  Nevertheless a part of the criterion has a derivative and it can be computed  using an adjoint state -say $P$- solution of (the notation $\;^{t}$ denotes the transposition):\\
\beq\label{eq160}{\mathcal M}\ddot P-\;^{t}{\mathcal C}\dot P+\;^{t}{\mathcal K}P=0,\;{\mathcal M}P(T)=\dot X(T),\;\; {\mathcal M}\dot P(T)=-X(T)+\;\hskip-0.05cm^{t}{\mathcal C}P(T). \eeq
Hence the derivative of the criterion without the term with $\gamma$, at $u\in {\Big [}H^1_0(]0,T[){\Big ]}^p$ in the direction $v\in {\Big [}H^1_0(]0,T[){\Big ]}^p$  is:
\beq\label{eq170bis}\left \{\begin{array}{l}\limj_{\eta\rightarrow 0}\fracj{J^{\varepsilon,\alpha,\beta,0}(u+\eta v)-J^{\varepsilon,\alpha,\beta,0}(u)}{\eta}=G^{\varepsilon,\alpha,\gamma,0}(u)(v)\\\\=\intj_0^T\;^{t}{\mathcal B} P.v+\varepsilon{\Big [}\alpha (u, v)+\beta(\fracj{du}{dt},\fracj{dv}{dt}){\Big]}=\intj_0^T{\Big (}\;^{t}{\mathcal B}P+\varepsilon(\alpha u-\beta\fracj{d^2 u}{dt^2}),v{\Big )}\end{array}\right.\eeq
Because the full criterion, $J^{\varepsilon,\alpha,\beta;\gamma}$ is not derivable (due to the term in factor of $\gamma\neq 0$), we use a second type variational inequality  (following G. Stampacchia \cite{STAMP}) for characterizing the solution which minimizes the criterion $J^{\varepsilon,\alpha;\beta,\gamma}$ explicited at (\ref{eq170}).  Hence the optimality condition is:
\beq\label{eq180}\left \{\begin{array}{l}\hbox{ find }u^\varepsilon \in {\Big [}H^1_0(]0,T[){\Big ]}^p\;\hbox{ such that:}\\\\ 
\forall  v \in {\Big [}H^1_0(]0,T[){\Big ]}^p,\;\;G^{\varepsilon,\alpha,\beta,0}(u^\varepsilon)(v-u^\varepsilon)+\gamma\intj_0^T\vert v\vert_{\a R^p}- \vert u^\varepsilon\vert_{\a R^p} \geq 0.\end{array}\right.\eeq
Or else by expliciting the gradient $G^{\varepsilon,\alpha,\beta,0}$, where $P^\varepsilon$ is the solution of the adjoint state (\ref{eq160}) and with $X^\varepsilon$  solution of the primal equation (\ref{eq150}) with the control $u^\varepsilon$:
\beq\label{eq190}\left \{\begin{array}{l}\forall  v \in {\Big [}H^1_0(]0,T[){\Big ]}^p,\\\\ 
\intj_0^T{\Big (}\;^{t}{\mathcal B}P^\varepsilon+\varepsilon(\alpha u^\varepsilon-\beta\fracj{d^2 u^\varepsilon}{dt^2}),v-u^\varepsilon{\Big )}+\gamma\intj_0^T \vert v\vert_{\a R^p}- \vert u^\varepsilon\vert_{\a R^p} \geq 0.\end{array}\right.\eeq
  \begin{remark}\label{rem10}In fact the above writing  is an {\it abuse} because:
$$\fracj{d^2 u^\varepsilon}{dt^2}\in {\Big [}H^{-1}(]0,T[){\Big ]}^p.$$
 Hence the corresponding term should be written with  brackets: $\langle.,.\rangle$, for representing the duality between the spaces ${\Big [}H^{-1}(]0,T[){\Big ]}^p$ and ${\Big [}H_0^{1}(]0,T[){\Big ]}^p$. But it is more convenient to write this term as follows:
\beq\label{eq200} \intj_0^T\beta (\fracj{du^\varepsilon}{dt},\fracj{dv}{dt}-\fracj{du^\varepsilon}{dt}).\eeq
\;\hfill $\Box$
\end{remark}

\subsection{The idea of R. Glowinski applied to the control model (when $\gamma>0$)} R. Glowinski \cite{GLOW} has suggested a new idea for non-Newtonian fluids as the Bingham model (Ketchup, mayonnaise, toothpaste...) and more generally gels. His method consists in two steps: the first one is a regularization of the non-differentiable term using a small parameter and the second step is a limit analysis when the regularization parameter tends to $0$.\\
\subsubsection{The regularization of the non derivable term.} \label{step1} First of all we construct another almost equivalent model for solving (\ref{eq190}) once $P^\varepsilon$ is known.  For any $\eta>0$ we introduce  a regularization of the norm of $\fracj{du}{dt}$, by:
$$\intj_0^T\vert \fracj{du}{dt}\vert_{\a R^p}\simeq \intj_0^T\sqrt{ \eta+\vert\fracj{du}{dt}\vert_{\a R^p}^2}-\sqrt{\eta},$$
which is G\^ateaux derivable at $u\in {\Big [}H^1_0(]0,T[){\Big ]}^p$ in the direction $v\in {\Big [}H^1_0(]0,T[){\Big ]}^p$. This derivative  is:
$$\intj_0^T\frac{(\frac{du}{dt},\frac{dv}{dt})}{\sqrt{\eta+\vert\frac{du}{dt}\vert_{\a R^p}^2}}.$$
 and an approximation of the  problem which characterizes $u^\varepsilon$ is (let us recall that $P^\varepsilon$ is assumed to be  given at this step):
\beq\label{eq201}\hskip-.1cm\left\{ \begin{array}{l}\hbox{find }u^{\varepsilon,\eta}\in {\big[}H^1_0(]0,T[){\big ]}^p\hbox{ which minimizes the quantity:}
\\\\
\fracj{\varepsilon}{2}\intj_0^T{\Big [}\alpha\vert v\vert_{\a R^p}^2+\beta\vert \fracj{dv}{dt}\vert_{\a R^p}^2+2\gamma(\sqrt{\eta+\vert \fracj{dv}{dt}\vert_{\a R^p}^2}-\sqrt{\eta}){\Big ]}+\intj_0^T(\;^{t}{\mathcal B}P^\varepsilon, v).
\end{array}\right.\eeq
Here again the functional to be minimized is continuous {\it coercive} and strictly convex. Therefore there is a unique solution to the previous problem.  But now, because the new functional is derivable, one can characterize the solution 
$u^{\varepsilon,\eta}\in {\Big [}H^1_0(]0,T[){\Big ]}^p$ with an equation:
\beq\label{eq202}\left \{\begin{array}{l}\forall v\in {\Big [}H^1_0(]0,T[){\Big ]}^p:\\\\
\intj_0^T{\Big [}\alpha (u^{\varepsilon,\eta},v)+ \beta (\fracj{du^{\varepsilon,\eta}}{dt},\fracj{dv}{dt}) +\gamma\frac{(\frac{du^{\varepsilon,\eta}}{dt},\frac{dv}{dt})}{\sqrt{\eta+\vert\frac{du^{\varepsilon,\eta}}{dt}\vert_{\a R^p}^2}}+\fracj{1}{\varepsilon}(\;^{t}{\mathcal B}P^{\varepsilon},v){\Big ]}=0,
\end{array}\right.
\eeq
where $P^{\varepsilon}$ is the adjoint state solution of (\ref{eq160})  (hence does not depends on $\eta$) solution of the adjoint equation associated to the state variable $X^{\varepsilon}$ solution of the state equation with the control $u^{\varepsilon}$.
It is convenient for the following to set: 
\beq\label{eq300}\lambda^{\varepsilon,\eta}=\frac{\frac{du^{\varepsilon,\eta}}{dt}}{\sqrt{\eta+\vert\frac{du^{\varepsilon,\eta}}{dt}\vert_{\a R^p}^2}}\in {\Big [}L^2(]0,T[){\Big ]}^p,\;\hbox{ and }\vert\lambda^{\eta}\vert_{\a R^p}\leq 1.\eeq
Furthermore one has\footnote{$\forall x\in \a R^+,\;\fracj{x^2}{\sqrt{\eta+x^2}}=\fracj{\eta+x^2}{\sqrt{\eta+x^2}}-\fracj{\eta}{\sqrt{\eta+x^2}}\geq x-\sqrt{\eta}$}:
\beq\label{eq310}\vert\lambda^{\varepsilon,\eta}\vert_{\a R^p}\leq 1\hbox{ and }\vert\fracj{du^{\varepsilon,\eta}}{dt}\vert_{\a R^p}-\sqrt{\eta}\leq (\lambda^{\varepsilon,\eta},\fracj{du^{\varepsilon,\eta}}{dt})\leq \vert\fracj{du^{\varepsilon,\eta}}{dt}\vert_{\a R^p}.\eeq
Setting:
 \beq\label{eq311}B_1=\{\mu\in \a R^p,\;\vert \mu\vert_{\a R^p} \leq 1\},\eeq
 it is worth to point out that one has from (\ref{eq300}):
\\
\beq\label{eq210}\forall \mu\in B_1,\;(\mu,\fracj{du^{\varepsilon,\eta}}{dt})\leq \vert\fracj{du^{\varepsilon,\eta}}{dt}\vert_{\a R^p}.\eeq
\\
Assuming that $\beta > 0$, it is clear that the solution $u^{\varepsilon,\eta}$ of (\ref{eq300}) is uniformly bounded versus $\eta$ in the space ${\Big [}H^1_0(]0,T[){\Big]}^p$. It is also obvious from (\ref{eq300}), that $\lambda^{\varepsilon,\eta}$ is uniformly bounded  versus $\eta$ in the space ${\Big [}L^\infty(]0,T[){\Big ]}^p$.
\\
Hence one can extract a subsequence from $(u^{\varepsilon,\eta},\lambda^{\varepsilon,\eta})$ denoted by $(u^{\varepsilon,\eta'},\lambda^{\varepsilon,\eta'})$ such that (see for instance H. Brezis \cite{BREZIS}):
\\
\beq\label{eq220}\left \{\begin{array}{l}
\limj_{\eta'\rightarrow 0}u^{\varepsilon,\eta'}=u^{\varepsilon,*}\hbox{ in the space } {\big [}H^1_0(]0,T[){\Big ]}^p-weakly,\\\\
\limj_{\eta'\rightarrow 0}\lambda^{\varepsilon,\eta'}= \lambda^{\varepsilon,*}\hbox{ in the space }{\big [}L^\infty(]0,T[){\Big ]}^p-weakly\;*.\end{array}\right.\eeq
\\
From the equation (\ref{eq202}) we deduce that:
\beq\label{eq230}\left \{\begin{array}{l}
\forall v\in  {\big [}H^1_0(]0,T[){\Big ]}^p,\\\\\varepsilon \intj_0^T\alpha (u^{\varepsilon,*},v)+\beta(\fracj{du^{\varepsilon,*}}{dt},\fracj{dv}{dt})+\gamma(\lambda^{\varepsilon,*},\fracj{dv}{dt})+\intj_0^T\;(^{t}\hskip-0.05cm{\mathcal B}P^{\varepsilon},v)=0.\end{array}\right.
\eeq
From (\ref{eq310}) one gets:
\beq\label{eq245}
\limj_{\eta' \rightarrow 0}(\lambda^{\varepsilon,\eta'},\fracj{du^{\varepsilon,\eta'}}{dt})=\vert\fracj{du^{\varepsilon,*}}{dt}\vert_{\a R^p}\hbox{  in the space }L^2(]0,T[).
\eeq
First of all let us check that $u^{\varepsilon,*}=u^\varepsilon$. From the equation (\ref{eq202})  in which we set $v=u^{\varepsilon,\eta'}$,  and because of (\ref{eq245}), we deduce that:
\beq\label{eq241}
\limj_{\eta'\rightarrow 0 }\intj_0^T{\Big [}\alpha \vert u^{\varepsilon,\eta'}\vert_{\a R^p}^2+\beta\vert \fracj{du^{\varepsilon,\eta'}}{dt}\vert_{\a R^p}^2{\Big ]}+\gamma \vert\fracj{du^{\varepsilon,*}}{dt}\vert_{\a R^p}+\fracj{1}{\varepsilon}\intj_0^T\;(^{t}\hskip-0.05cm{\mathcal B}P^\varepsilon,u^{\varepsilon,*})=0.
\eeq
Let us make use of the lower-semi-continuity for the weak topology of convex functions \cite{Baushke} and of the relation (\ref{eq210}). This leads to:
\beq\label{eq242}\intj_0^T{\Big [}\alpha \vert u^{\varepsilon,*}\vert_{\a R^p}^2+\beta\vert \fracj{du^{\varepsilon,*}}{dt}\vert^2{\Big ]}+\gamma\vert \fracj{du^{\varepsilon,*}}{dt}\vert_{\a R^p}+\fracj{1}{\varepsilon}\intj_0^T(\;^{t}\hskip-0.05cm{\mathcal B}P^\varepsilon,u^{\varepsilon,*})\leq 0,\eeq
and from (\ref{eq230}) we have also:
\beq\label{eq400}\forall v\in {\Big [}H^1_0(]0,T[){\Big ]},\;\intj_0^T\alpha (u^{\varepsilon,*},v)+\beta(\fracj{du^{\varepsilon,*}}{dt},\fracj{dv}{dt})+\gamma \vert\fracj{dv}{dt}\vert_{\a R^p}+\fracj{1}{\varepsilon}(\;^{t}\hskip-0.05cm{\mathcal B} P^{\varepsilon},v)\geq 0.\eeq
which implies by combining the two previous relations:
\beq\label{eq405}\hskip-0.6cm\left\{\begin{array}{l}\forall v\in {\Big [}H^1_0(]0,T[){\Big ]},\\\\
\;\hskip-0.25cm\intj_0^T\hskip-0.2cm\alpha (u^{\varepsilon,*},v\hskip-0.1cm-\hskip-0.1cmu^{\varepsilon,*})\hskip-0.1cm+\hskip-0.1cm\beta(\fracj{du^{\varepsilon,*}}{dt},\fracj{dv}{dt}\hskip-0.1cm-\hskip-0.1cm\fracj{du^{\varepsilon,*}}{dt})\hskip-0.1cm+\hskip-0.1cm\gamma (\vert\fracj{dv}{dt}\vert_{\a R^p}\hskip-0.1cm-\hskip-0.1cm\vert\fracj{du^{\varepsilon,*}}{dt}\vert_{\a R^p})\\\\+\fracj{1}{\varepsilon}\hskip-0.1cm(\;^{t}\hskip-0.05cm{\mathcal B} P^{\varepsilon},v\hskip-0.1cm-\hskip-0.1cmu^{\varepsilon,*})\hskip-0.1cm\geq \hskip-0.1cm0.\end{array}\right.\eeq
Therefore $u^{\varepsilon,*}$ is solution of the same inequation that $u^\varepsilon$. And from the uniqueness of this solution: $u^{\varepsilon,*}=u^{\varepsilon}$. Hence $u^\varepsilon$ is the only accumulation point of the sequence $u^{\varepsilon,\eta}$ (versus $\eta$) and therefore all the sequence $u^{\varepsilon,\eta}$ converges weakly to $u^\varepsilon$ when $\eta\rightarrow 0$ (otherwise one could extract another subsequence which would converge to another solution of the inequation (\ref{eq190})). 
\\\\
Let us now prove that  $u^{\varepsilon,\eta}$ converges strongly to $u^{\varepsilon}$ in the space ${\Big [}H^1_0(]0,T[){\Big ]}^p$. This will also ensure that $\vert\fracj{du^{\varepsilon,*}}{dt}\vert_{\a R^p}=(\lambda^{\varepsilon,*},\fracj{du^{\varepsilon,*}}{dt}).$ In other words, this will imply that almost everywhere if $\fracj{du^{\varepsilon,*}}{dt}\neq 0$ then $\lambda^{\varepsilon,*}=\pm 1$.\\\\
From the equations satisfied by $u^{\varepsilon,\eta}$ and $u^{\varepsilon}$ one gets ($u^\varepsilon=u^{\varepsilon,*}$):
\beq\label{eq250}\left \{\begin{array}{l}\forall v\in {\Big [}H^1_0(]0,T[){\Big ]}^p\\\\
\intj_0^T\alpha (u^{\varepsilon,\eta}-u^{\varepsilon},v)+\beta(\fracj{du^{\varepsilon,\eta}}{dt}-\fracj{du^{\varepsilon}}{dt},\fracj{dv}{dt})+\gamma(\lambda^{\varepsilon,\eta}-\lambda^{\varepsilon,*},\fracj{dv}{dt})=0.\end{array}\right.
\eeq
Setting $v=u^{\varepsilon,\eta}-u^{\varepsilon}$ in (\ref{eq250}), we deduce that:
\beq\label{eq270}\begin{array}{l}\hskip-0.05cm\limj_{\eta\rightarrow 0}\intj_0^T\hskip-0.05cm\alpha\vert u^{\varepsilon,\eta}\hskip-0.05cm-\hskip-0.05cmu^{\varepsilon}\vert_{\a R^p}^2\hskip-0.05cm+\hskip-0.05cm\beta \vert\fracj{du^{\varepsilon,\eta}}{dt}\hskip-0.05cm-\hskip-0.05cm\fracj{du^\varepsilon}{dt}\vert_{\a R^p}^2\hskip-0.05cm\\\\+\hskip-0.05cm\gamma\limj_{\eta\rightarrow 0}\intj_0^T\hskip-0.05cm(\lambda^{\varepsilon,\eta}\hskip-0.05cm-\hskip-0.05cm\lambda^{\varepsilon,*},\fracj{d(u^{\varepsilon,\eta}\hskip-0.05cm-\hskip-0.05cmu^{\varepsilon})}{dt})\hskip-0.05cm =\hskip-0.05cm0.\end{array}\eeq
But from the weak convergence and (\ref{eq245}):
$$\begin{array}{l}\limj_{\eta\rightarrow 0}\intj_0^T\hskip-0.05cm(\lambda^{\varepsilon,\eta}\hskip-0.05cm-\hskip-0.05cm\lambda^{\varepsilon,*},\fracj{d(u^{\varepsilon,\eta}\hskip-0.05cm-\hskip-0.05cmu^{\varepsilon})}{dt})\hskip-0.05cm =\\\\\limj_{\eta\rightarrow 0}\intj_0^T(\lambda^{\varepsilon,\eta},\fracj{du^{\varepsilon,\eta}}{dt})-\intj_0^T(\lambda^{\varepsilon,*},\fracj{du^{\varepsilon,*}}{dt})\geq\limj_{\eta\rightarrow 0}\intj_0^T(\lambda^{\varepsilon,\eta},\fracj{du^{\varepsilon,\eta}}{dt})-\intj_0^T\vert\fracj{du^{\varepsilon,\eta}}{dt}\vert_{\a R^p}=0,\end{array}$$
which implies that:
\beq\label{eq260}\hskip-.5cm\left \{\begin{array}{l}\limj_{\eta\rightarrow 0}\intj_0^T\alpha  \vert u^{\varepsilon,\eta}-u^\varepsilon\vert_{\a R^p}^2+\beta\vert \fracj{du^{\varepsilon,\eta}}{dt}-\fracj{du^\varepsilon}{dt}\vert_{\a R^p}^2=0,\\\\
\hbox{and by expliciting the second term and introducing the known limits: }\\\\
\gamma\intj_0^T\vert\fracj{du^\varepsilon}{dt}\vert_{\a R^p}={\limj_{\eta\rightarrow 0}\gamma \intj_0^T(\lambda^{\varepsilon,\eta},\fracj{du^{\varepsilon,\eta}}{dt})\leq \gamma \intj_0^T(\lambda^{\varepsilon,*},\fracj{du^{\varepsilon}}{dt}})\leq\gamma \intj_0^T\vert\fracj{du^\varepsilon}{dt}\vert_{\a R^p}\end{array}\right.\eeq
\\
which completes the proof of the strong convergence of $u^{\varepsilon,\eta}$ to $u^\varepsilon$ and also the relation which was not obvious (because (\ref{eq245}) is a weaker result):
 $$\intj_0^T(\lambda^{\varepsilon,*},\fracj{du^\varepsilon}{dt})=\intj_0^T\vert\fracj{du^\varepsilon}{dt}\vert_{\a R^p}.$$ 
In conclusion, using the trick of R. Glowinski \cite{GLOW} adapted to our case, we proved in this subsection, that the solution $u^\varepsilon\in {\Big [}H^1_0(]0,T[){\Big ]}^p$ can be obtained as a function of $P^\varepsilon$ by solving the following mixed formulation ($B_1$ is defined at (\ref{eq311}) and we have omitted the superscript $*$ in  the expression of $\lambda^\varepsilon$ as far as there is no ambiguity):
\beq\label{eq265}\left \{\begin{array}{l}\forall v\in {\Big [}H^1_0(]0,T[){\Big ]}^p:\\\\\varepsilon \intj_0^T{\Big [}\alpha (u^\varepsilon,v)+\beta(\fracj{du^\varepsilon}{dt},\fracj{dv}{dt})+\gamma(\lambda^\varepsilon,\fracj{dv}{dt}){\Big ]}+\intj_0^T(\;^{t}\hskip-.05cm{\mathcal B}P^\varepsilon,v)=0,\
\\\\\
\hbox{ for almost every }t\in ]0,T[,\;\forall \mu \in B_1,\;(\mu-\lambda^\varepsilon,\fracj{du^\varepsilon}{dt})\leq 0,\;\;\lambda^\varepsilon \in B_1.
\end{array}\right.\eeq
\subsubsection{Uniqueness of $(u^\varepsilon,\lambda^\varepsilon)$} Concerning $u^\varepsilon$ one could argue that the uniqueness is a consequence of the initial formulation of the model through the variational inequality. But let us give the result by another computation which also leads to the uniqueness of $\lambda^\varepsilon$ which has not yet been proved. Let us assume that there are two solutions denoted by $(u^{\varepsilon,i},\lambda^{\varepsilon,i}),\;i=1,2$  to the model explicited at (\ref{eq265}). Starting from the difference between the two relations obtained from (\ref{eq265}), we obtain:
\beq\label{eq266}\left \{\begin{array}{l}
\forall v\in {\Big [}H^1_0(]0,T[){\Big ]}^p:\\\\
\intj_0^T\alpha (u^{\varepsilon,1}-u^{\varepsilon,2},v)+\beta(\fracj{du^{\varepsilon,1}}{dt}-\fracj{du^{\varepsilon,2}}{dt},\fracj{dv}{dt})+\gamma(\lambda^{\varepsilon,1}-\lambda^{\varepsilon,2},\fracj{dv}{dt})=0,\\\\
\intj_0^T(\lambda^{\varepsilon,1}-\lambda^{\varepsilon,2}, \fracj{du^{\varepsilon,1}}{dt}-\fracj{du^{\varepsilon,2}}{dt})\geq 0.
\end{array}\right.\eeq
Setting $v=u^{\varepsilon,1}-u^{\varepsilon,2}$ in (\ref{eq266}) leads to the following informations:
\beq\label{eq267}\left \{ \begin{array}{l}
\intj_0^T\alpha  \vert u^{\varepsilon,1}-u^{\varepsilon,2}\vert_{\a R^p}^2+\beta 
\vert\fracj{du^{\varepsilon,1}}{dt}-\fracj{du^{\varepsilon,2}}{dt}\vert_{\a R^p}^2=0\;=>\;u^{\varepsilon,1}=u^{\varepsilon,2},\\\\
\intj_0^T(\lambda^{\varepsilon,1}-\lambda^{\varepsilon,2},\fracj{dv}{dt})=0\;=>\;\fracj{d\lambda^{\varepsilon,1}}{dt}=\fracj{d\lambda^{\varepsilon,2}}{dt}\;=>\; \lambda^{\varepsilon,1}=\lambda^{\varepsilon,2}+c.
\end{array}\right.\eeq
We proved (for any solution $\lambda^\varepsilon$) that for almost any $t\in ]0,T[$ one has $\lambda^{\varepsilon}=\pm 1$, the sign depending on the one of ${du^\varepsilon}/{dt}$. Hence if $u^\varepsilon\neq 0$  on a segment of $]0,T[$ ($u^\varepsilon=0$ is the only case where one can have ${du^\varepsilon}/{dt}=0$ on a segment  of $]0,T[$) the constant $c$ must be zero. Let us notice that if we would have $u^\varepsilon=0$, we deduce  from (\ref{eq265}) that: $\;^{t}\hskip-0.05cm {\mathcal B}P^\varepsilon=0$. From the controllability property this requires that $P^\varepsilon=0$ which means that $u^\varepsilon=0$ is an exact control and the problem is solved without any computation.  This proves the uniqueness of $\lambda^\varepsilon$ unless if $u^\varepsilon=0$ is the solution of the optimal control problem.
  \subsection{{\it Pseudo} asymptotic analysis versus the marginal cost $\varepsilon$ of the control}
  The choice of the small parameter $\varepsilon$ is not obvious. Anyway the formulation of the optimal control strategy is meaningful if and only if a controllability property is satisfied by the system. This exact controllability property for ordinary differential equations, was first stated independently and almost simultaneously by R. Bellman and L. Pontryagin \cite{BEL}-\cite{PONT}. 
  \begin{definition}\label{def1}The system (\ref{eq150}) is controllable at time $T$ if for any initial condition of the state equation $X_0\in \a R^N$, $X_1\in \a R^N$ and any right-hand side $F\in {\Big [}L^2(]0,T[){\Big ]}^N$, there exists a control $u\in { [}H^1_0(]0,T[){ ]}^p$ such that: $X(T)=\dot X(T)=0$. In fact the time $T>0$ is arbitrary as far as there is no bound on the control. \hfill $\Box$
  \end{definition}
 \noindent Let us give another definition which is adapted to our problem and which is equivalent in our case, to the one of R. Bellman and L. Pontryagin.
  \begin{definition}\label{def2} Let $Q$ be a solution (the initial conditions are not prescribed) of the equation:
  \beq\label{eq500} {\mathcal M}\ddot Q-\;^{t}\hskip-0.05cm{\mathcal C}\dot Q-\;^{t}\hskip-0.05cm{\mathcal K}Q=0\hbox{ and }^{t}\hskip-0.05cm{\mathcal B}Q=0.\eeq
  The system is exactly controllable if and only if $Q\equiv 0$. \hfill $\Box$\end{definition}
  %
 \noindent We assume that the exact controllability property (see Definitions  \ref{def1}-\ref{def2}) is satisfied. Our goal is to prove that the optimal control $u^\varepsilon$ solution of the model (\ref{eq170}) has a limit when $\varepsilon \rightarrow 0$ and to suggest a way to compute it.
  \\
  Because the optimality relations are not linear (as far as $\gamma\neq 0$), a special analysis is required. 
  Let us set  {\it a priori} (but it is clear at this step that this is not justified):
  \beq\label{eq600}\left \{\begin{array}{l}
  u^\varepsilon=u^0+\varepsilon u^1+ \ldots, 
  \\
  X^\varepsilon=X^0+\varepsilon X^1+ \ldots,
   \\
  P^\varepsilon=P^0+\varepsilon P^1+ \ldots
  \\
  \lambda^\varepsilon=\lambda^0+\ldots
  \end{array}\right.
  \eeq
Nothing guarantees that this assumed asymptotic expansion makes sense. Nevertheless we use it as a guide for our purpose. Let us point out several necessary relations:  
\beq\label{eq610}\hskip-.5cm\left\{\begin{array}{l}{\mathcal M}\ddot X^0+{\mathcal C}\dot X^0+{\mathcal K}X^0={\mathcal B}u^0+{\mathcal F},\;X^0(0)=X_0,\;\dot X^0(0)=X_1,
\\ \\
\forall v\in {\Big [}H^1_0(]0,T[){\Big ]}^p,\;\intj_0^T(\;^{t}\hskip-0.05cm{\mathcal B}P^0,v)=0=> ^{t}\hskip-0.05cm{\mathcal B}P^0=0,\\\\{\mathcal M}\ddot P^0-\;^{t}{\mathcal C}\dot P^0+\;^{t}\hskip-0.05cm{\mathcal K}P^0=0,\\\\ {\mathcal M}P^0(T)=\dot X(T),\;{\mathcal M}\dot P^0(T)=-X^0(T)+\;\hskip-0.05cm ^t{\mathcal C}P^0(T),\\\\\
{\mathcal M}\ddot P^1-\;^{t}{\mathcal C}\dot P^1+\;^{t}{\mathcal K}P^1=0,\;P^1(0)=\Phi_0\in \a R^N,\;\dot P^1(T)=\Phi_1\in \a R^N,\\\\
\forall v\in {\Big [}H^1_0(]0,T[){\Big ]}^p,\\\\\intj_0^T{\Big [}\alpha (u^0,v)+\beta (\fracj{du^0}{dt},\fracj{dv}{dt})+\gamma (\lambda^0,\fracj{dv}{dt}){\Big ]}=-\intj_0^T\;(^{t}\hskip-0.05cm{\mathcal B}P^1,v),\\\\
\hbox{ for almost any }t\in ]0,T[,\;\lambda^0\in B _1,\;\;\forall \mu\in B_1,\;(\mu-\lambda^0,\fracj{du^0}{dt})\leq 0.
\end{array}\right.\eeq
Because of the non-linearity of the last inequation we can only state a necessary condition for the term of order zero in $\varepsilon$, even if we underline that, at this step, we do not know if this is correct or not.\\
Let us recall that one can eliminate $\lambda^0$ as we did for $u^\varepsilon$,  in order to obtain a characterization of $u^0$ by a variational inequation:
\beq\label{eq611}\hskip-0.5cm\left \{\begin{array}{l}
\forall v\in {\Big [}H^1_0(]0,T[){\Big ]}^p,
\;\;\hskip-0.1cm\intj_0^T\hskip-0.1cm\alpha (u^0,v-u^0)\hskip-0.05cm+\hskip-0.05cm\beta(\fracj{du^0}{dt},\fracj{dv}{dt}\hskip-0.05cm-\hskip-0.05cm\fracj{du^0}{dt})\hskip-0.05cm\hskip-0.05cm\\\\ +\gamma\intj_0^T(\vert\fracj{dv}{dt}\vert_{\a R^p}\hskip-0.05cm-\hskip-0.05cm\vert \fracj{du^0}{dt}\vert_{\a R^p})\hskip-0.05cm+\intj_0^T\hskip-0.05cm\;(^{t}\hskip-0.05cm{\mathcal B}P^1,v-u^0)\hskip-0.05cm\geq \hskip-0.05cm0.\end{array}\right.
\eeq
But the existence of a solution to the full system (including $u^0,\lambda^0,X^0$ and $P^1$) is not yet proved.
The goal of the rest of this subsection is to prove that (\ref{eq610}) enables to characterize this term uniquely and to give an algorithm for computing it. We also prove in the following that when $\varepsilon \rightarrow 0$ the following result holds (we recall that it  has been proved that $P^0$ is necessarily equal to zero because of the controllability property) :  
$$(u^\varepsilon,\lambda^\varepsilon,X^\varepsilon,\fracj{P^\varepsilon}{\varepsilon})  \hbox{ converges to }(u^0,\lambda^0,X^0,P^1)$$
 in the space:
 $${\Big [}H^1_0(]0,T[){\Big ]}^p\times {\Big [}L^\infty(]0,T[){\Big ]}^p\times {\Big [}{\mathcal C}^1([0,T]){\big ]}^N\times {\Big [}{\mathcal C}^1([0,T]){\big ]}^N.$$
  \subsubsection{A few notations} Let us set $\Psi=(\Psi_0,\Psi_1)\in \a R^{2N}$ and we associate $Q(\Psi)$ solution of:
  \beq\label{eq620}{\mathcal M}\ddot Q-\;^{t}\hskip-.05cm {\mathcal C}\dot Q+\;^{t}\hskip-.05cm{\mathcal K}Q=0,\;Q(0)=\Psi_0,\;\dot Q(0)=\Psi_1.\eeq\\
  Let $(u(\Psi),\lambda(\Psi))\in {\Big [}H^1_0(]0,T[){\Big ]}^p\times  {\Big [}L^\infty(]0,T[){\Big ]}^p$ be the solution of:
  \beq\label{eq630}\hskip-.4cm\left \{\begin{array}{l}
  \forall v\in{\Big [}H^1_0(]0,T[){\Big ]}^p,\\\\\intj_0^T\hskip-0.15cm{\Big [}\alpha (u(\Psi),v)\hskip-0.05cm+\hskip-0.05cm\beta(\fracj{du(\Psi)}{dt},\fracj{dv}{dt})\hskip-0.05cm+\hskip-0.05cm\gamma(\lambda(\Psi),\fracj{dv}{dt}){\Big ]}\hskip-0.05cm=\hskip-0.1cm-\hskip-0.1cm\intj_0^T\hskip-0.15cm(\;^{t}\hskip-0.05cm{\mathcal B}Q(\Psi)v),\\\\
  \hbox{where for almost any } t\in ]0,T[,\;\lambda(\psi)\in B_1\hbox{ and }\forall \mu \in B_1,\;(\mu-\lambda(\Psi),\fracj{du(\Psi)}{dt})\leq 0,\\\\(\hbox{hence }(\lambda(\Psi),\fracj{du(\Psi)}{dt})=\vert\fracj{du(\Psi)}{dt}\vert_{\a R^p}) \\\\ \hbox{ more precisely  }\lambda =\hbox{sign}(\fracj{du(\psi)}{dt}) \hbox{ almost everywhere }\fracj{du(\psi)}{dt}\neq 0.
  \end{array}\right.\eeq
  For any $\Psi\in \a R^{2N}$ we introduce the non linear operator $\Lambda$ from $\a R^{2N}$ into itself and defined by:
  \beq\hskip-0.05cm\forall \delta\Psi\in \a R^{2N},\;(\Lambda(\Psi),\delta\Psi)\hskip-0.05cm=\hskip-0.05cm-\intj_0^T\hskip-0.05cm(\;^{t}\hskip-0.05cm{\mathcal B}Q(\delta \Psi),u(\Psi)),\; (\hbox{let us notice that: }\Lambda(0)\hskip-0.05cm=\hskip-0.05cm0).\eeq
  Let us now give several properties of the operator $\Lambda$. 
  \subsubsection{Monotonicity of $\Lambda$}
  First of all there is a strictly positive constant $c_0$ independent on $\Psi$ and such that:
 \beq\label{eq640}\begin{array}{l}(\Lambda(\Psi),\Psi)=-\intj_0^T(\;^{t}\hskip-0.05cm{\mathcal B}Q(\Psi),u(\psi))=
 \\\\
 \intj_0^T\hskip-0.1cm{\Big [}\alpha \vert u(\Psi)\vert_{\a R^p}^2\hskip-0.05cm+\hskip-0.05cm\beta\vert\fracj{du(\Psi)}{dt}\vert_{\a R^p}^2{\Big ]}\hskip-0.05cm+\gamma\vert\fracj{du(\Psi)}{dt}\vert_{\a R^p}\geq\hskip-0.05cm c_0\vert\vert u(\Psi)\vert\vert^2_{1,]0,T[}\end{array}\eeq
 Therefore, if $\Lambda(\Psi)=0$, then $u(\Psi)=0$. 
Let us now consider two vectors of $\a R^{2N}$ -say $\Psi^1$ and $\Psi^2$. One has ($Q$ is linear versus $\Psi$ but not $u$):
$$\begin{array}{l}(\Lambda(\Psi^1)-\Lambda(\Psi^2),\Psi^1-\Psi^2)=-\intj_0^T(\;^{t}\hskip-0.05cm{\mathcal B}Q(\Psi^1-\Psi^2),u(\Psi^1)-u(\Psi^2))=\\\\
\hskip-0.1cm\intj_0^T\hskip-0.1cm\alpha \vert u(\Psi^1)\hskip-0.05cm-\hskip-0.05cmu(\Psi^2)\vert^2\hskip-0.05cm+\hskip-0.05cm\beta\vert \fracj{du(\Psi^1)}{dt}\hskip-0.05cm-\hskip-0.05cm\fracj{du(\Psi^2)}{dt}\vert^2\hskip-0.05cm+\hskip-0.05cm\gamma(\lambda(\Psi^1)\hskip-0.05cm-\hskip-0.05cm\lambda(\Psi^2),\fracj{du(\Psi^1)}{dt}\hskip-0.05cm-\hskip-0.05cm\fracj{du(\Psi^2)}{dt}).
\end{array}
$$
$$\begin{array}{l}\hbox{From }(\lambda(\Psi)\in B_1):\;\;(\lambda(\Psi^1)\hskip-0.05cm-\hskip-0.05cm\lambda(\Psi^2),\fracj{du(\Psi^1)}{dt}\hskip-0.05cm-\hskip-0.05cm\fracj{du(\Psi^2)}{dt})=\\\\\ \vert\fracj{du(\Psi^1)}{dt}\vert_{\a R^p}+\vert\fracj{du(\Psi^2)}{dt}\vert_{\a R^p}-(\lambda(\Psi^1),\fracj{du(\Psi^2)}{dt})-(\lambda(\Psi^2),\fracj{du(\Psi^1)}{dt})\geq 0,\end{array}$$
we deduce that:
\beq\label{eq660}(\Lambda(\Psi^1)-\Lambda(\Psi^2),\Psi^1-\Psi^2)\geq\\
\hskip-0.1cm\intj_0^T\hskip-0.1cm\alpha \vert  u(\Psi^1)\hskip-0.05cm-\hskip-0.05cmu(\Psi^2)\vert_{\a R^p}^2\hskip-0.05cm+\hskip-0.05cm\beta\vert \fracj{du(\Psi^1)}{dt}\hskip-0.05cm-\hskip-0.05cm\fracj{du(\Psi^2)}{dt}\vert_{\a R^p}^2,\eeq
hence  there exists a strictly positive constant $c_2$ (we can choose $c_2=c_0!$) independent neither on $\Psi^1$ nor on $\Psi^2$ such that:
\beq\label{eq661}(\Lambda(\Psi^1)-\Lambda(\Psi^2),\Psi^1-\Psi^2)\geq c_2\vert\vert u(\Psi^1)-u(\Psi^2)\vert\vert_{1,]0,T[}^2.\eeq
Therefore the operator $\Lambda$ is strictly monotone. But one can say more.
\\
\subsubsection{Coerciveness of  $\Lambda$} Let us notice that:
\beq\left \vert\label{eq670}\begin{array}{l}\forall v\in{\Big [}H^1_0(]0,T[){\Big ]}^p \hbox{ one has from the definition of } u(\Psi):
\\
-\intj_0^T (\;^{t}\hskip-0.05cm{\mathcal B}Q(\Psi),v)=\intj_0^T\alpha (u(\Psi),v)+\beta (\fracj{du(\Psi)}{dt},\fracj{dv}{dt})+\gamma(\lambda(\Psi),\fracj{dv}{dt}),
\\
\hbox{ and }\hbox{for almost any }t\in ]0,T[:
\\
\lambda(\Psi)\in B_1\hbox{ and }\forall \mu\in B_1,\;(\mu,\fracj{du(\Psi)}{dt})\leq \vert\fracj{du(\Psi)}{dt}\vert_{\a R^p}.
\end{array}\right.\eeq
Let $\delta\in \a R^{+}$ such that $0\leq \delta \leq 1$. If we denote by 
$u(\delta\Psi)$ the solution of (\ref{eq670}) with $Q(\delta \Psi)$ given, one can check directly that $u(\delta\Psi)=\delta u(\Psi)$ and that $\delta\lambda(\Psi)\in B_1$ is a solution (not uniquely defined!) associated to $u(\delta\Psi)$,  is a solution of (\ref{eq670}) (where $\Psi$ is replaced by $\delta\Psi$). We restrict $\delta$ to the segment $[0,1]$ in order to have $\delta\lambda(\Psi)\in B_1$. Let us also notice that because of the exact controllability assumption (see Definition \ref{def2}), the quantity:
$$\Psi\in \a R^{2N}\rightarrow \vert\vert^{t}{\mathcal B}P(\Psi)\vert\vert_{-1,]0,T[},$$
is a norm on $\a R^{2N}$ equivalent to any other norm on $\a R^{2N}$ (because it is a finite dimensional space).
\\
Let us turn to a first result which will be used for the {\it  coerciveness} of $\Lambda$.
\begin{theorem}\label{th10} Let $d>0$ be a constant sufficiently large (this is explicited in the proof). We define  $D^d$ the disc of $\a R^{2N}$ by:
$$D^d=\{\Psi \in \a R^{2N},\; \vert\vert^{t}{\mathcal B}P(\Psi)\vert\vert_{-1,]0,T[},\leq d\}$$
and its boundary $\partial D^d$ by :
$$\partial D^d=\{\Psi\in \a R^{2N},\;\vert\vert^{t}{\mathcal B}P(\Psi)\vert\vert_{-1,]0,T[},=d\}.$$
Then:
$$\exists c_3>0 \hbox{ such that }\forall \Psi\in \partial D^d,\;c_3\vert\vert^{t}{\mathcal B}P(\Psi)\vert\vert_{-1,]0,T[}\leq \vert\vert u(\Psi)\vert\vert_{1,]0,T[}.$$
($c_3$ is obviously independent on $\Psi$ but can depend on $d$).
\hfill ${\Box}$\end{theorem}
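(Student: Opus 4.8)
The plan is to argue by contradiction, exploiting the homogeneity property of $u(\Psi)$ established just before the statement, namely $u(\delta\Psi)=\delta u(\Psi)$ for $0\le\delta\le 1$, together with the strict monotonicity/coerciveness estimate $(\Lambda(\Psi),\Psi)\ge c_0\|u(\Psi)\|_{1,]0,T[}^2$ from \eqref{eq640}. Suppose the claimed inequality fails on $\partial D^d$ for every $c_3>0$. Then there is a sequence $\Psi_n\in\partial D^d$ with $\|u(\Psi_n)\|_{1,]0,T[}\le\frac1n\,\|{}^{t}{\mathcal B}P(\Psi_n)\|_{-1,]0,T[}=\frac dn\to 0$. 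Since $\partial D^d$ is a bounded subset of the finite-dimensional space $\a R^{2N}$ (boundedness because $\Psi\mapsto\|{}^{t}{\mathcal B}P(\Psi)\|_{-1,]0,T[}$ is a norm on $\a R^{2N}$ under the exact-controllability assumption, hence equivalent to any other norm), we may extract $\Psi_n\to\Psi_\infty$ with $\|{}^{t}{\mathcal B}P(\Psi_\infty)\|_{-1,]0,T[}=d>0$, so in particular $\Psi_\infty\ne 0$.

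Next I would pass to the limit in the defining relation \eqref{eq630} for $u(\Psi_n)$. The source term $-\intj_0^T({}^{t}{\mathcal B}Q(\Psi_n),v)$ depends continuously (indeed linearly) on $\Psi_n$ through the linear ODE \eqref{eq620}, so it converges to $-\intj_0^T({}^{t}{\mathcal B}Q(\Psi_\infty),v)$; meanwhile $u(\Psi_n)\to 0$ strongly in $[H^1_0(]0,T[)]^p$ by hypothesis, so $\frac{du(\Psi_n)}{dt}\to 0$ and the terms $\alpha(u(\Psi_n),v)$, $\beta(\frac{du(\Psi_n)}{dt},\frac{dv}{dt})$ vanish in the limit; finally $\lambda(\Psi_n)\in B_1$ is bounded in $[L^\infty(]0,T[)]^p$, so up to a further subsequence $\lambda(\Psi_n)\to\lambda_\infty$ weak-$*$, and $\gamma(\lambda(\Psi_n),\frac{dv}{dt})$ likewise converges. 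Passing to the limit gives, for all $v\in[H^1_0(]0,T[)]^p$,
\[
\gamma\intj_0^T(\lambda_\infty,\fracj{dv}{dt})=-\intj_0^T({}^{t}{\mathcal B}Q(\Psi_\infty),v).
\]
The left-hand side equals $-\gamma\intj_0^T(\frac{d\lambda_\infty}{dt},v)$ in the distributional sense, so $\frac{d\lambda_\infty}{dt}={}^{t}{\mathcal B}Q(\Psi_\infty)/\gamma$; but the natural way to close the argument without worrying about the regularity of $\lambda_\infty$ is simply to test with the solution $Q(\Psi_\infty)$ itself (or a smooth approximation of it), after integrating by parts the state equation, which produces $\intj_0^T|{}^{t}{\mathcal B}Q(\Psi_\infty)|^2_{\a R^p}$-type terms up to boundary contributions controlled by $\|{}^{t}{\mathcal B}P(\Psi_\infty)\|_{-1,]0,T[}$. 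More directly: from $(\Lambda(\Psi_n),\Psi_n)\ge c_0\|u(\Psi_n)\|_{1,]0,T[}^2$ and the upper bound $(\Lambda(\Psi_n),\Psi_n)=-\intj_0^T({}^{t}{\mathcal B}Q(\Psi_n),u(\Psi_n))\le\|{}^{t}{\mathcal B}Q(\Psi_n)\|_{-1,]0,T[}\,\|u(\Psi_n)\|_{1,]0,T[}$, one cannot immediately conclude; instead one uses the mixed formulation \eqref{eq630} tested against $v=u(\Psi_n)$ to get
\[
\intj_0^T\alpha|u(\Psi_n)|^2_{\a R^p}+\beta|\fracj{du(\Psi_n)}{dt}|^2_{\a R^p}+\gamma|\fracj{du(\Psi_n)}{dt}|_{\a R^p}=-\intj_0^T({}^{t}{\mathcal B}Q(\Psi_n),u(\Psi_n)),
\]
and the key point is that the right-hand side, divided by $\|u(\Psi_n)\|_{1,]0,T[}$, stays bounded below by a positive constant: indeed by duality $\sup_{v\ne 0}\frac{-\intj_0^T({}^{t}{\mathcal B}Q(\Psi_n),v)}{\|v\|_{1,]0,T[}}=\|{}^{t}{\mathcal B}Q(\Psi_n)\|_{-1,]0,T[}$, and testing the defining relation shows this supremum is attained (or nearly attained) at $v=u(\Psi_n)$ modulo the $\gamma$-term, forcing $\|u(\Psi_n)\|_{1,]0,T[}\gtrsim\|{}^{t}{\mathcal B}Q(\Psi_n)\|_{-1,]0,T[}$ once $n$ is large — contradicting $\|u(\Psi_n)\|_{1,]0,T[}\le\frac dn\to 0$ while $\|{}^{t}{\mathcal B}Q(\Psi_n)\|_{-1,]0,T[}\to d>0$. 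This contradiction proves the existence of $c_3$, and makes explicit that "$d$ sufficiently large" is needed only to guarantee $\partial D^d$ is nonempty and bounded away from $0$, i.e. $d>0$ suffices once controllability holds.

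The main obstacle I anticipate is the handling of the $\gamma$-term with the multiplier $\lambda$: because $\lambda(\Psi)$ is only defined up to its values where $\frac{du(\Psi)}{dt}=0$ and converges merely weak-$*$, one must be careful that $\intj_0^T(\lambda(\Psi_n),\frac{dv}{dt})$ does not contribute a spurious positive mass in the limit that would spoil the lower bound on $\|u(\Psi_n)\|_{1,]0,T[}$. The clean way around this, which I would adopt, is to avoid the limiting argument on $\lambda$ altogether and instead run the duality estimate quantitatively at fixed $n$: from \eqref{eq630} with $v$ chosen to realize $\|{}^{t}{\mathcal B}Q(\Psi_n)\|_{-1,]0,T[}$ within $\varepsilon$, Cauchy–Schwarz on the $\alpha,\beta$ terms and $|\lambda(\Psi_n)|_{\a R^p}\le 1$ on the $\gamma$ term give
\[
\|{}^{t}{\mathcal B}Q(\Psi_n)\|_{-1,]0,T[}\le C\big(\|u(\Psi_n)\|_{1,]0,T[}+\gamma\,T^{1/2}\big)\cdot\text{(something)},
\]
which is not quite enough; the correct statement — and the one the theorem is really asserting — requires coupling this with \eqref{eq640}, so that $\|u(\Psi_n)\|_{1,]0,T[}^2\gtrsim(\Lambda(\Psi_n),\Psi_n)=-\intj_0^T({}^{t}{\mathcal B}Q(\Psi_n),u(\Psi_n))$ and a reverse bound forces $\|u(\Psi_n)\|_{1,]0,T[}$ to be comparable to $\|{}^{t}{\mathcal B}Q(\Psi_n)\|_{-1,]0,T[}$ on the sphere $\partial D^d$ by compactness. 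Making the chain of inequalities airtight, and pinning down exactly how large $d$ must be so that the $\gamma$-contribution is dominated by the quadratic part, is where the real work lies.
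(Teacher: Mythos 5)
Your skeleton (contradiction, a sequence $\Psi_n\in\partial D^d$ with $\Vert u(\Psi_n)\Vert_{1,]0,T[}\le d/n$, compactness of $\partial D^d$ in $\a R^{2N}$, extraction of a limit $\Psi_\infty$ with $\Vert\,^{t}{\mathcal B}P(\Psi_\infty)\Vert_{-1,]0,T[}=d$) is exactly the paper's, but you do not close the argument, and the way you try to close it contains a real error. The step ``testing the defining relation shows this supremum is (nearly) attained at $v=u(\Psi_n)$ modulo the $\gamma$-term, forcing $\Vert u(\Psi_n)\Vert_{1,]0,T[}\gtrsim\Vert\,^{t}{\mathcal B}Q(\Psi_n)\Vert_{-1,]0,T[}$'' is circular --- that lower bound is precisely the statement of the theorem --- and it is false in general: because of the non-differentiable term $\gamma\int_0^T\vert\frac{dv}{dt}\vert_{\a R^p}$ the problem has a yield threshold of Bingham type, so whenever $\Vert\,^{t}{\mathcal B}Q(\Psi)\Vert_{-1,]0,T[}$ is small compared with $\gamma\sqrt{T}$ the minimizer is $u(\Psi)=0$ even though $Q(\Psi)\neq 0$. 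For the same reason your final assertion that ``$d>0$ suffices once controllability holds'' is wrong: on a small sphere $\partial D^d$ one can have $u(\Psi)\equiv 0$ and no constant $c_3>0$ exists. The hypothesis ``$d$ sufficiently large'' is not about nonemptiness of $\partial D^d$; it is quantitative, and the paper's proof identifies it as $d>\gamma\sqrt{T}$.

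The missing idea is the passage to the limit in the variational \emph{inequality} (or, equivalently, in the mixed formulation keeping only $\vert\lambda\vert_{\a R^p}\le 1$). Since $u(\Psi_{n'})\to 0$ strongly in $[H^1_0(]0,T[)]^p$, all terms involving $u(\Psi_{n'})$ disappear and one is left with
\begin{equation*}
\forall v\in \left[H^1_0(]0,T[)\right]^p,\qquad -\int_0^T(\,^{t}{\mathcal B}P(\Psi_\infty),v)\;\le\;\gamma\int_0^T\Big\vert\frac{dv}{dt}\Big\vert_{\a R^p}\;\le\;\gamma\sqrt{T}\,\Vert v\Vert_{1,]0,T[},
\end{equation*}
hence $d=\Vert\,^{t}{\mathcal B}P(\Psi_\infty)\Vert_{-1,]0,T[}\le\gamma\sqrt{T}$, which contradicts the choice $d>\gamma\sqrt{T}$ and finishes the proof. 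You actually wrote down the limiting identity $\gamma\int_0^T(\lambda_\infty,\frac{dv}{dt})=-\int_0^T(\,^{t}{\mathcal B}Q(\Psi_\infty),v)$, from which the same bound follows immediately using only $\vert\lambda_\infty\vert_{\a R^p}\le 1$ (no regularity of $\lambda_\infty$ is needed, and there is no ``spurious mass'' issue since only an upper bound on the pairing is required); had you exploited it instead of switching to the duality/attainment argument, you would have recovered the paper's proof, including the correct threshold for $d$.
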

\begin{proof} Let us use an absurdist reasoning. If Theorem \ref{th10} is false, then for any $n>0$ there exists an element $\Psi^n\in \partial D^d$ such that:
$$\vert\vert u(\Psi^n)\vert\vert_{1,]0,T[}\leq \fracj{d}{n}.$$
Hence, because $\partial D^d$ is bounded in $\a R^{2N}$,  there is a subsequence -say $\Psi^{n'}$- of $\Psi^n$ which converges to an element $\Psi^*\in \partial D^d$ and the corresponding solution $u(\Psi^{n'})$ converges to $0$ in the space ${\Big [}H^1_0(]0,T[){\Big ]}^p$. From the variational characterization (the inequation) of $u(\Psi^{n'})$ one has:
$$\begin{array}{l}\intj_0^T\alpha(u(\Psi^{n'}),u(\Psi^{n'}))+\beta(\fracj{du(\Psi^{n'})}{dt},\fracj{du(\Psi^{n'})}{dt})+\gamma\vert\fracj{du(\Psi(^{n'})}{dt}\vert\\\\-\intj_0^T(^{t}{\mathcal B}P(\Psi^{n'}),v-u(\Psi^{n'}))\leq 
\intj_0^T\alpha(u(\Psi^{n'}),v)+\beta(\fracj{du(\Psi^{n'})}{dt},\fracj{dv}{dt})+\gamma\vert\fracj{dv}{dt}\vert.\end{array}$$
And at the limit when $n'\rightarrow 0$:
$$\forall v\in {\Big [}H^1_0(]0,T[){\Big ]}^p,\;\fracj{-\intj_0^T(^{t}{\mathcal B}P(\Phi^*),v)}{\vert\vert v\vert \vert_{1,]0,T[} }\leq \gamma \sqrt{T}\vert\vert v\vert\vert_{1,]0,T[},$$
which implies that:
$$d=\vert\vert ^{t}{\mathcal B}P(\Psi^*)\vert\vert_{ -1,]0,T[} \leq\gamma\sqrt{T}.$$
Finally if $d>\gamma \sqrt{T}$ we obtain the contradiction and this proves Theorem \ref{th10}.
\end{proof}
\noindent We can prove now the second useful result for the {\it coerciveness} of $\Lambda$.
\begin{theorem}\label{th12} Assuming that $d>\gamma \sqrt{T}$ there is a strictly positive constant $c_4$ (one can choose $c_4=c_3!$) such that one has the following inequality (with the same notations as in Theorem \ref{th10}):
$$\forall \Psi\in D^d,\;c_4\vert\vert^{t}{\mathcal B}P(\Psi)\vert\vert_{-1,]0,T[}\leq \vert\vert u(\Psi)\vert\vert_{1,]0,T[}.$$
\hfill $\Box$\end{theorem}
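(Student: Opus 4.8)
The plan is to deduce the estimate on the whole disc $D^d$ from the one on its boundary $\partial D^d$ provided by Theorem \ref{th10}, by exploiting the positive homogeneity of the map $\Psi\mapsto u(\Psi)$ for scaling factors in $[0,1]$ recorded just before Theorem \ref{th10}, together with the linearity of $\Psi\mapsto P(\Psi)$ (it solves a linear homogeneous equation with data depending linearly on $\Psi$, so $\vert\vert{}^t\mathcal B P(\cdot)\vert\vert_{-1,]0,T[}$ is positively homogeneous of degree one).

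First I would dispose of the trivial case $\Psi=0$: then $P(0)=0$, hence $\vert\vert{}^t\mathcal B P(0)\vert\vert_{-1,]0,T[}=0$, while (\ref{eq640}) (or direct inspection of (\ref{eq630})) gives $u(0)=0$, so the asserted inequality holds with equality.

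Next fix $\Psi\in D^d$ with $\Psi\neq 0$, and set $r=\vert\vert{}^t\mathcal B P(\Psi)\vert\vert_{-1,]0,T[}$. Since $\vert\vert{}^t\mathcal B P(\cdot)\vert\vert_{-1,]0,T[}$ is a genuine norm on $\a R^{2N}$ — this is exactly the content of the exact controllability assumption, Definition \ref{def2} — we have $0<r\le d$. Put $\Psi'=(d/r)\Psi$; by the homogeneity of $P$, $\vert\vert{}^t\mathcal B P(\Psi')\vert\vert_{-1,]0,T[}=(d/r)\,r=d$, so $\Psi'\in\partial D^d$. Writing $\Psi=\delta\Psi'$ with $\delta=r/d\in(0,1]$, the homogeneity property $u(\delta\Psi')=\delta\,u(\Psi')$ — valid precisely because $\delta\le1$ keeps $\delta\lambda(\Psi')\in B_1$ — gives $\vert\vert u(\Psi)\vert\vert_{1,]0,T[}=\delta\,\vert\vert u(\Psi')\vert\vert_{1,]0,T[}$.

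Finally I would invoke Theorem \ref{th10} at the point $\Psi'\in\partial D^d$ (legitimate since we assume $d>\gamma\sqrt T$), which yields $c_3 d=c_3\vert\vert{}^t\mathcal B P(\Psi')\vert\vert_{-1,]0,T[}\le\vert\vert u(\Psi')\vert\vert_{1,]0,T[}$; multiplying by $\delta$ gives $c_3 r\le\delta\,\vert\vert u(\Psi')\vert\vert_{1,]0,T[}=\vert\vert u(\Psi)\vert\vert_{1,]0,T[}$, that is, $c_3\vert\vert{}^t\mathcal B P(\Psi)\vert\vert_{-1,]0,T[}\le\vert\vert u(\Psi)\vert\vert_{1,]0,T[}$, so the statement holds with $c_4=c_3$. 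There is no deep obstacle here; the one subtlety to respect is the direction of the scaling — since $u(\cdot)$ is only known to be homogeneous for contractions $\delta\le1$, one must dilate the given $\Psi$ out onto the sphere $\partial D^d$ and then regard $\Psi$ as a contraction of that boundary point, rather than shrinking $\Psi$ directly; the rest is bookkeeping.
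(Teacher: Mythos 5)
Your argument is correct and follows essentially the same route as the paper: write $\Psi=\delta\tilde\Psi$ with $\tilde\Psi\in\partial D^d$ and $\delta\in[0,1]$, use the homogeneity $u(\delta\tilde\Psi)=\delta u(\tilde\Psi)$ together with the linearity of $\Psi\mapsto P(\Psi)$, apply Theorem \ref{th10} on the boundary, and multiply by $\delta$. Your write-up merely makes the scaling explicit (via $r=\vert\vert{}^t\mathcal{B}P(\Psi)\vert\vert_{-1,]0,T[}$ and the norm property from exact controllability), which the paper leaves implicit in its choice of $\delta$ and $\tilde\Psi$.
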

\begin{proof} Let us consider an arbitrary element $\Psi\in  D^d$ and let us introduce a coefficient $\delta\in [0,1]$ such that: $\Psi=\delta \tilde \Psi$ where $\tilde\Psi\in \partial D^d$. If $\delta=0$ the Theorem \ref{th12} is obvious. If $\delta >0$ one has from the previous considerations: $u(\Psi)= u(\delta\tilde \Psi)=\delta u(\tilde \Psi)$. And applying Theorem \ref{th10} to $\tilde \Psi$ (at this step we can say that $c_4=c_3$):
$$c_3\vert\vert ^{t}{\mathcal B}P(\tilde \Psi)\vert\vert_{-1,]0,T[}\leq \vert\vert u(\tilde \Psi)\vert\vert_{1,]0,T[},$$
or else by multiplying by $\delta$:
$$ c_3\vert\vert ^{t}{\mathcal B}P(\Psi)\vert\vert_{-1,]0,T[}\leq \vert\vert u(\Psi)\vert\vert_{1,]0,T[}.$$
This completes the proof of Theorem \ref{th12}. This result proves also that:
$$\hbox{ if }\vert\vert ^{t}{B}P(\Psi)\vert\vert_{_1,]0,T[}\rightarrow \infty \hbox{ then }\vert\vert u(\Psi)\vert\vert_{1,]0,T[}\rightarrow \infty,$$
which can be interpreted as a {\it coerciveness} property of $u(\Psi)$ versus $\Psi$.
\end{proof}
\vskip .2cm
\noindent Let us turn now to a more precise characterization of the the strict monotonicity of $\Lambda$. 
\\\\
\noindent Let $\Psi^1$ and $\Psi^2$ be two elements of $ D^d$ and $u^1$ and $u^2$ the corresponding solutions of the variational inequations for $\Psi^1$ and $\Psi^2$ given. Let us use again an absurdist reasoning as we did in the proof of Theorem \ref{th10}. Thus, we assume in a first step that :
\beq\label{eq1000}\vert\vert ^{t}{\mathcal B}P(\Psi^1\hskip-.05cm-\hskip-.05cm\Psi^2)\vert\vert_{-1,]0,T[} \;(\simeq \hskip-.05cm\vert \Psi^1\hskip-.05cm-\hskip-.05cm\Psi^2\vert_{\a R^{2N}})=\hskip-.05cm g>0; (\hbox{ one has obviously }g\hskip-.05cm\leq \hskip-.05cm2d).\eeq
Let us assume that there is no strictly positive constant $c^*$ (independent neither on $\Psi^1$ nor on $\Psi^2$ such that:
\beq\label{eq1001}\forall \Psi^1,\Psi^2\in D^d, \hbox{satisfying (\ref{eq1000})},\;c^*\vert \Psi^1-\Psi^2\vert_{\a R^{2N}}\leq \vert\vert u^1-u^2\vert\vert_{1,]0,T[}.\eeq
Then, for any $n\in \a N^*$,\;there exist two sequences $\Psi^1_n$ and $\Psi^2_n$ in $D^d$, such that $$\vert \Psi^1-\Psi^2\vert_{\a R^{2N}}=g.$$ They are associated to the solutions $u^1_n$ and $u^2_n$ of the variational equation such that:
\beq\label{eq1002}
\vert \vert u^1_n-u^2_n\vert\vert_{1,]0,T[}\leq \fracj{1}{n},\;\vert \Psi^1_n-\Psi^2_n\vert_{\a R^{2N}}=g,\;\vert \Psi^1_n\vert_{\a R^{2N}}\leq d,\;\vert\Psi^2_n\vert_{\a R^{2N}}\leq d.
\eeq
Thus one can conclude that there are subsequences $\Psi^1_{n'},\Psi^2_{n'}$ and $u^1_{n'},u^2_{n'}$ which converge  to elements $\Psi^1_*,\Psi^2_*$ in $\a R^{2N}$ and to $u^1_*,u^2_*$ weakly in the space ${\Big [}H^1_0(]0,T[){\Big ]}^p$ and $u^1_*=u^2_*$ (strong convergence to zero of $u^1_n-u^2_n$ in the space ${\Big [}H^1_0(]0,T[){\Big]}^p$).
\\
Let us recall the inequations satisfied by $u^1$ and $u^2$:
\beq\label{eq1003}\left \{\begin{array}{l}
\forall\;i=1,2,\;\forall v\in {\Big [}H^1_0(]0,T[){\Big]}^p:\;\;
\intj_0^T\alpha(u^{i},v-u^{i})+\beta(\fracj{du^{i}}{dt},\fracj{dv}{dt}-\fracj{du^{i}}{dt})\\\\+\gamma\intj_0^T(\vert\fracj{dv}{dt}\vert-\vert\fracj{du^{i}}{dt}\vert)\geq -\intj_0^T\;^{t}{\mathcal B}P^{i}(v-u^{i}).
\end{array}\right.
\eeq
Let us choose $v=w+u^1$ for $i=1$ and $v=-w+u^2$ for $i=2$.  From:
$$\vert\fracj {d(\pm w +u^{i})}{dt}\vert\leq \vert\fracj {dw}{dt}\vert +\vert\fracj{d u^{i}}{dt}\vert,$$
this leads after convergence, to:
\beq\label{eq1004}\left \{\begin{array}{l}\forall w\in {\Big [}H^1_0(]0,T[){\Big]}^p:\;\;
2\gamma \intj_0^T\vert \fracj{dw}{dt}\vert\geq -\intj_0^T\;^{t}{\mathcal B}P(\Psi^1_*-\Psi^2_*).w
\end{array}\right.
\eeq
Or else, using Cauchy-Schwarz triangular inequality and from the definition of the norm in the functional space $H^{-1}(]0,T[)$:
\beq\label{eq2005}\vert\vert ^{t}{\mathcal B} P(\Psi^1_*-\Psi^2_*)\vert\vert_{-1,]0,T[}\leq 2\gamma\sqrt{T}.\eeq
Hence we get a contradiction if we choose $g>2\gamma\sqrt{T}$. Therefore the inequality (\ref{eq1001}) is true.
\\
Let us now consider another couple $\Psi^1,\Psi^2$ associated to the solution $u^1,u^2$ and such that: $$\vert\Psi^1-\Psi^2\vert\leq g.$$
If $\Psi^1=\Psi^2$ the inequality (\ref{eq1001}) is still true. \\If $\Psi^1\neq \Psi^2$ we set $\Psi^1=\kappa\tilde\Psi^1,\Psi^2=\kappa\tilde\Psi^2$ where $\kappa=\fracj{\vert\Psi^1-\Psi^2\vert_{\a R^{2N}}}{g}\leq 1$.\\
The inequality (\ref{eq1001}) is true for $\tilde \Psi^1$ and $\tilde \Psi^2$. But $\Psi^1=\kappa \tilde\Psi^1,\Psi^2=\kappa \tilde\Psi^2$ are associated to $u^1=\kappa \tilde u^1,u^2=\kappa\tilde u^2$ and therefore the inequality (\ref{eq1001}) is still true for $\Psi^1,\Psi^2$ and $u^1,u^2$. Finally we summarize the obtained result in the following statement.
\begin{theorem}\label{th11} The exact controllability is  assumed. Then there exists a constant $c^*>0$ such that for any $\Psi^1,\Psi^2 \in D^d$ and $u^1,u^2$ the solution associated, one has:
\beq \label{eq1006}c^*\vert\Psi^1-\Psi^2\vert_{\a R^{2N}}\leq \vert\vert u^1-u^2\vert\vert_{1,]0,T[}.\eeq
\;\hfill $\Box$
  \end{theorem}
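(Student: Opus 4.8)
The plan is to turn the informal reasoning assembled just before the statement into a proof, in two stages: first establish the estimate on a fixed ``sphere'' of $\a R^{2N}$ by a compactness‑and‑contradiction argument modelled on the proof of Theorem~\ref{th10}, and then propagate it to all of $D^d$ using the positive homogeneity $u(\delta\Psi)=\delta\,u(\Psi)$ recalled before (\ref{eq670}). \emph{Step~1 (reduction to a sphere).} Fix $g>2\gamma\sqrt T$ and take $d$ large enough that the set of pairs $(\Psi^1,\Psi^2)\in D^d\times D^d$ with $\|{}^t{\mathcal B}P(\Psi^1-\Psi^2)\|_{-1,]0,T[}=g$ is non‑empty; this is possible because, under exact controllability, $\Psi\mapsto\|{}^t{\mathcal B}P(\Psi)\|_{-1,]0,T[}$ is a norm on the finite‑dimensional space $\a R^{2N}$, hence equivalent to $|\cdot|_{\a R^{2N}}$, while $D^d$ grows with $d$. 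Assume, for contradiction, that no constant works on this sphere: pick $\Psi^1_n,\Psi^2_n\in D^d$ with $\|{}^t{\mathcal B}P(\Psi^1_n-\Psi^2_n)\|_{-1}=g$ and $\|u^1_n-u^2_n\|_{1,]0,T[}\le 1/n$. Since $D^d$ is bounded in $\a R^{2N}$ and, by the coerciveness of Theorem~\ref{th12}, $u^i_n$ is bounded in $[H^1_0(]0,T[)]^p$, one may extract subsequences with $\Psi^i_n\to\Psi^i_*$ in $\a R^{2N}$ and $u^i_n\rightharpoonup u^i_*$ weakly in $[H^1_0(]0,T[)]^p$; moreover $u^1_n-u^2_n\to0$ strongly, so $u^1_*=u^2_*$.

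\emph{Step~2 (passing to the limit).} In the variational inequality (\ref{eq1003}) I would test with $v=w+u^1_n$ for $i=1$ and with $v=-w+u^2_n$ for $i=2$, where $w\in[H^1_0(]0,T[)]^p$ is arbitrary, and add the two. Using $|\frac{d}{dt}(\pm w+u^i_n)|_{\a R^p}\le|\frac{dw}{dt}|_{\a R^p}+|\frac{du^i_n}{dt}|_{\a R^p}$, the non‑differentiable $\gamma$‑terms are absorbed into $2\gamma\int_0^T|\frac{dw}{dt}|_{\a R^p}$, while the quadratic and cross terms in $u^i_n$ disappear in the limit thanks to the strong convergence $u^1_n-u^2_n\to0$ and the continuous dependence $P(\Psi^i_n)\to P(\Psi^i_*)$. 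This yields, for every $w$,
$$2\gamma\int_0^T\Big|\frac{dw}{dt}\Big|_{\a R^p}\ \ge\ -\int_0^T\big({}^t{\mathcal B}P(\Psi^1_*-\Psi^2_*),w\big),$$
hence, by the definition of $\|\cdot\|_{-1,]0,T[}$ and the Cauchy--Schwarz inequality, $\|{}^t{\mathcal B}P(\Psi^1_*-\Psi^2_*)\|_{-1,]0,T[}\le 2\gamma\sqrt T<g$. Since the constraint $\|{}^t{\mathcal B}P(\Psi^1_n-\Psi^2_n)\|_{-1}=g$ passes to the limit, this contradicts $g>2\gamma\sqrt T$. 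Therefore there is $c^*_0>0$ with $c^*_0|\Psi^1-\Psi^2|_{\a R^{2N}}\le\|u^1-u^2\|_{1,]0,T[}$ for every pair with $\|{}^t{\mathcal B}P(\Psi^1-\Psi^2)\|_{-1}=g$.

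\emph{Step~3 (homogeneity) and the main obstacle.} For a general pair $\Psi^1,\Psi^2\in D^d$: if $\Psi^1=\Psi^2$ the inequality is trivial; otherwise set $\kappa=\|{}^t{\mathcal B}P(\Psi^1-\Psi^2)\|_{-1}/g\le1$ (choosing $g$ at least as large as the $H^{-1}$‑diameter of $D^d$) and $\tilde\Psi^i=\Psi^i/\kappa$, so that $\|{}^t{\mathcal B}P(\tilde\Psi^1-\tilde\Psi^2)\|_{-1}=g$ and, by homogeneity, $u^i=\kappa\,\tilde u^i$. Applying Step~2 to $(\tilde\Psi^1,\tilde\Psi^2)$ and multiplying by $\kappa$ gives the claim with $c^*=c^*_0$, up to the fixed equivalence constant between $\|{}^t{\mathcal B}P(\cdot)\|_{-1}$ and $|\cdot|_{\a R^{2N}}$. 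The hard part will be Step~2: one must choose precisely the test functions $v=w+u^1$ and $v=-w+u^2$ (rather than $v=u^2$, $v=u^1$) so that the two $L^1$‑type $\gamma$‑terms telescope, through the triangle inequality, into a single $2\gamma\int|w'|$ that is \emph{linear} in $w$, and for this to survive the passage to the limit one genuinely needs the strong $H^1_0$‑convergence $u^1_n-u^2_n\to0$ — weak convergence would not suffice. A secondary point to verify is that $d$ and $g$ can indeed be fixed so that the sphere in Step~1 is non‑empty and $\kappa\le1$ in Step~3, which is immediate from the equivalence of norms on $\a R^{2N}$ provided by exact controllability.
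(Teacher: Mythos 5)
Your proof is correct and follows essentially the same route as the paper: the contradiction argument on pairs with $\Vert\,{}^{t}\mathcal{B}P(\Psi^1-\Psi^2)\Vert_{-1,]0,T[}=g>2\gamma\sqrt{T}$, the test functions $v=w+u^1$ and $v=-w+u^2$ whose $\gamma$-terms telescope into $2\gamma\intj_0^T\vert\frac{dw}{dt}\vert_{\a R^p}$, and the final scaling/homogeneity step reproduce exactly the argument the paper gives just before the statement. The only small correction is that the boundedness of $u^i_n$ in ${\big[}H^1_0(]0,T[){\big]}^p$ follows from the coercivity estimate (\ref{eq640}) combined with $\Vert\,{}^{t}\mathcal{B}Q(\Psi)\Vert_{-1,]0,T[}\leq d$ on $D^d$, not from Theorem \ref{th12}, which bounds $\Vert u(\Psi)\Vert_{1,]0,T[}$ from below rather than from above.
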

\begin{remark}\label{rem20} The constant $c^*$ can depend on $d$. But $d$ is arbitrary as soon it is strictly larger than $\gamma \sqrt{T}$. Hence the result of Theorem \ref{th11} is always true.\hfill $\Box$\end{remark}
\vskip.2cm
\begin{remark}\label{rem21} If for instance we set $\Psi^2=0$ we get back to Theorem \ref{th10}. Therefore one could object that it was not necessary to introduce it. But we believe that it was more convenient for the reader to separate the two Theorems \ref{th10} and \ref{th11}. \hfill $\Box$\end{remark}
\vskip.2cm
\noindent The inequality of Theorem \ref{th11} proves the  {\it  coerciveness} of the strictly monotone operator $\Lambda$ on $\a R^{2N}$.
Let us turn now to the continuity. 
\subsubsection{Continuity of $\Lambda$} From:
\beq\label{eq710} \left \{\begin{array}{l}(\Lambda(\Psi^1)-\Lambda(\Psi^2),\Psi^1-\Psi^2)=-\intj_0^T(\;^{t}\hskip-0.05cm{\mathcal B}Q(\Psi^1-\Psi^2),u(\Psi^1)-u(\Psi^2))\leq
\\\\
\vert\vert\; ^{t}\hskip-0.05cm{\mathcal B}Q(\Psi^1-\Psi^2)\vert\vert_{-1,]0,T[}\;\vert\vert u(\Psi^1)-u(\Psi^2)\vert\vert_{1,]0,T[},
\end{array}\right.\eeq
\\
and from (\ref{eq661}) recalling that the norm $\vert\vert\; ^{t}\hskip-0.05cm{\mathcal B}Q(\Psi^1-\Psi^2)\vert\vert_{-1,]0,T[}$ is equivalent to any norm on $\a R^{2N}$ (finite dimensional space), there exists a strictly positive constant $c_5$ such that:
\beq\label{eq720}(\Lambda(\Psi^1)-\Lambda(\Psi^2),\Psi^1-\Psi^2)\leq c_5\vert \Psi^1-\Psi^2\vert_{\a R^{2N}}^2.\eeq
We can also write:
\beq\label{eq701}\begin{array}{l}\vert \Lambda(\Psi^1)-\Lambda(\Psi^2)\vert_{\a R^{2N}}=\sup_{\delta\Psi\in \a R^{2N}}\fracj{(\Lambda(\Psi^1)-\Lambda(\Psi^2),\delta\Psi)}{\vert \delta \Psi\vert_{\a R^{2N}}}=
\\\\
\sup_{\delta\Psi\in \a R^{2N}}\fracj{-\intj_0^T(\;^{t}\hskip-0.05cm{\mathcal B}Q(\delta\Psi),u(\Psi^1)-u(\Psi^2))}{\vert \delta \Psi\vert_{\a R^{2N}}}\leq \\\\\sup_{\delta\Psi\in \a R^{2N}}\fracj{\vert\vert \;^{t}\hskip -0.05cm{\mathcal B}Q(\delta\Psi)\vert\vert_{-1,]0,T[}} {\vert \delta \Psi\vert_{\a R^{2N}}} \;\vert\vert u(\Psi^1)-u(\Psi^2)\vert\vert_{1,]0,T[}\leq \\\\c_5\vert\vert u(\Psi^1)-u(\Psi^2)\vert\vert_{1,]0,T[}.
\end{array}
\eeq
From the inequality (\ref{eq661}), we deduce:
\\\\
$$\begin{array}{l}\sqrt{c_0}\vert\vert u(\Psi^1)-u(\psi^2)\vert\vert_{1,]0,T[}\leq \sqrt{(\Lambda(\Psi^1)-\Lambda(\Psi^2),\Psi^1-\Psi^2)}\leq \\\\\sqrt{\vert (\Lambda(\Psi^1)-\Lambda(\Psi^2))\vert_{\a R^{2N}}}\;\sqrt{\vert\Psi^1-\Psi^2\vert_{\a R^{2N}}}.\end{array}$$
Finally we obtain the estimate:
\beq\label{eq702}\vert \Lambda(\Psi^1)-\Lambda(\Psi^2)\vert_{\a R^{2N}}\leq (\fracj{c_5^2}{c_0})\vert \Psi^1-\Psi^2\vert_{\a R^{2N}}.\eeq
Let us discuss now the existence and uniqueness of a solution to the following equation for any arbitrary vector $L\in \a R^{2N}$:
\beq\label{eq730} \Lambda(\Phi)=L.
\eeq
%
\begin{theorem}\label{th15}
Let us assume  that the exact controllability property is satisfied. For any vector $L\in \a R^{2N}$ there is a unique solution $\Phi\in \a R^{2N}$ to equation (\ref{eq730}). Furthermore there exists a strictly positive constant $c_5$ such that:
\hfill $\Box$
\end{theorem}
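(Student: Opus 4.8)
The plan is that, at this point, all the analytic content is already available: by (\ref{eq702}) the operator $\Lambda$ is Lipschitz on $\a R^{2N}$ (with constant, say, $M=c_5^2/c_0$), by (\ref{eq640}) it satisfies $\Lambda(0)=0$, and by combining the monotonicity estimate (\ref{eq661}) (in which one may take $c_2=c_0$) with Theorem \ref{th11} and Remark \ref{rem20} it is \emph{uniformly strictly monotone}, i.e. there is $m>0$ (namely $m=c_0(c^*)^2$) with
$$\bigl(\Lambda(\Psi^1)-\Lambda(\Psi^2),\Psi^1-\Psi^2\bigr)\geq m\,|\Psi^1-\Psi^2|_{\a R^{2N}}^2\qquad\forall\,\Psi^1,\Psi^2\in\a R^{2N}.$$
Taking $\Psi^2=0$ this also gives coerciveness, so Theorem \ref{th15} is an instance of the classical surjectivity result for monotone coercive operators (in the elementary finite-dimensional form; see e.g. \cite{BREZIS}), uniqueness coming from strict monotonicity. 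However, I would give a direct self-contained proof by a fixed-point argument that produces existence and uniqueness simultaneously.

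Fix $L\in\a R^{2N}$ and, for a parameter $\rho>0$ to be chosen, define $T_\rho:\a R^{2N}\to\a R^{2N}$ by $T_\rho(\Psi)=\Psi-\rho\bigl(\Lambda(\Psi)-L\bigr)$. A point $\Phi$ solves $\Lambda(\Phi)=L$ if and only if it is a fixed point of $T_\rho$. Expanding the square and using the two estimates above,
$$|T_\rho(\Psi^1)-T_\rho(\Psi^2)|_{\a R^{2N}}^2=|\Psi^1-\Psi^2|_{\a R^{2N}}^2-2\rho\bigl(\Lambda(\Psi^1)-\Lambda(\Psi^2),\Psi^1-\Psi^2\bigr)+\rho^2|\Lambda(\Psi^1)-\Lambda(\Psi^2)|_{\a R^{2N}}^2$$
$$\leq\bigl(1-2\rho m+\rho^2M^2\bigr)\,|\Psi^1-\Psi^2|_{\a R^{2N}}^2.$$
Since $m\leq M$ (Cauchy--Schwarz) one has $1-2\rho m+\rho^2M^2\geq(1-\rho M)^2\geq0$, and this coefficient is $<1$ as soon as $0<\rho<2m/M^2$. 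For such $\rho$, $T_\rho$ is a strict contraction of the complete space $\a R^{2N}$, hence has a unique fixed point $\Phi$ by the Banach fixed-point theorem; this $\Phi$ is the unique solution of $\Lambda(\Phi)=L$.

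Finally, the stability bound announced at the end of the statement (the displayed inequality is cut off in the excerpt, but it must be the Lipschitz dependence of the solution on the data) is immediate from strict monotonicity: if $\Lambda(\Phi^i)=L^i$ for $i=1,2$, then $m\,|\Phi^1-\Phi^2|_{\a R^{2N}}^2\leq(\Lambda(\Phi^1)-\Lambda(\Phi^2),\Phi^1-\Phi^2)=(L^1-L^2,\Phi^1-\Phi^2)\leq|L^1-L^2|_{\a R^{2N}}\,|\Phi^1-\Phi^2|_{\a R^{2N}}$, whence $|\Phi^1-\Phi^2|_{\a R^{2N}}\leq m^{-1}|L^1-L^2|_{\a R^{2N}}$; in particular $|\Phi|_{\a R^{2N}}\leq m^{-1}|L|_{\a R^{2N}}$ since $\Lambda(0)=0$, which is the asserted estimate with $c_5=m^{-1}$.

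The only point that needs care is that the strict-monotonicity constant $m$ can indeed be taken independent of $\Psi^1,\Psi^2$ on all of $\a R^{2N}$, and not merely on a fixed disc $D^d$; this is exactly the content of Remark \ref{rem20}, obtained through the homogeneity $u(\delta\Psi)=\delta u(\Psi)$ for $0\leq\delta\leq1$ already exploited in the proof of Theorem \ref{th11}. Apart from this bookkeeping there is no real obstacle: the substance of the argument is entirely contained in Theorems \ref{th10}--\ref{th11} and in the continuity estimate (\ref{eq702}).
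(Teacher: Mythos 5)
Your argument is essentially the paper's own proof: the fixed-point map $T_\rho(\Psi)=\Psi-\rho(\Lambda(\Psi)-L)$ is exactly the iteration $\Phi^{n+1}=\Phi^n-\varrho[\Lambda(\Phi^n)-L]$ used in the paper, with the same contraction factor $1-2\varrho m+\varrho^2 c_5^4/c_0^2$ coming from the strong monotonicity of Theorem \ref{th11} (extended to all of $\a R^{2N}$ via Remark \ref{rem20}) and the Lipschitz bound (\ref{eq702}), the paper merely phrasing the Banach fixed-point argument as a Cauchy-sequence computation. Your reconstruction of the truncated \emph{a priori} estimate also matches the paper's (\ref{eq760}), which bounds $\vert\Phi\vert_{\a R^{2N}}$ by a constant times $\vert L\vert_{\a R^{2N}}$ using the same coerciveness inequality.
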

%
\begin{proof} Let us define a sequence of $\a R^{2N}$ by the algorithm:
$$\Phi^{n+1}=\Phi^n-\varrho[\Lambda(\Phi^n)-L],\; \varrho\in \a R^{+*},\;\Phi^0\hbox{ being arbitrary, for instance }\Phi^0=0.$$
One has, using the two relations (\ref{eq1006})-(\ref{eq702}):
$$\hskip-0.1cm\begin{array}{l}\vert \Phi^{n+1}-\Phi^n\vert_{\a R^{2N}}^2=vert \Phi^{n}-\Phi^{n-1}\vert_{\a R^{2N}}^2\hskip-0.05cm-\hskip-0.05cm2\varrho (\Lambda(\Phi^n)-\Lambda(\Phi^{n-1}),\Phi^{n}-\Phi^{n-1})\hskip-0.05cm\\\\ +\hskip-0.05cm\varrho^2\vert\Lambda(\Phi^n)-\Lambda (\Phi^{n-1})\vert_{\a R^{2N}}^2
\leq \vert \Phi^{n}-\Phi^{n-1}\vert_{\a R^{2N}}^2(1-2\varrho c_*+(\fracj{c_5^4}{c_0^2})\varrho^2)\end{array}$$
Choosing $0<\varrho<2\fracj{c_0^2c_*}{c_5^4}$ we can conclude that  $\Phi^n$ is a Cauchy sequence one and therefore converges to an unique element $\Phi$ which is solution of (\ref{eq730}). The {\it a priori} estimate on $\Phi$ is a consequence of (\ref{eq1006}) because:
\beq\label{eq760}
\vert L\vert_{\a R^{2N}}=\sup_{\delta \Psi\in \a R^{2N}}\fracj{ (\Lambda (\Phi),\delta\Psi)}{\vert \delta \Psi\vert_{\a R^{2N}}}\geq
\fracj{ (\Lambda (\Phi),\Phi)}{\vert \Phi\vert_{\a R^{2N}}}\geq c_0\fracj{\vert\vert u(\Phi)\vert\vert^2_{1,]0,T[}}{\vert \Phi\vert_{\a R^{2N}}}\geq c_0c_*\vert \Phi\vert_{\a R^{2N}}.
\eeq
\end{proof} 
\begin{remark}\label{rem11} It is worth to notice for the numerical applications that all the constants which appear in the previous estimates, can be computed or at least estimated with a reduced computational cost. Nevertheless, an optimal step for $\varrho$ is certainly a good choice (many tricky suggestions for the choice of $\varrho$ can be found in  J. Cea \cite{CEA}).
\hfill $\Box$\end{remark}
%
\subsubsection{Global solution of the {\it pseudo} asymptotic model} Let us come back to the initial goal. We aim at finding a control $u$ such that $X(T)=\dot X(T)=0$ where $X$ is the solution of the state equation with the control $u$. For any $\delta \Phi=(\delta\Phi_0,\delta\Phi_1)\in \a R^{2N}$, we introduce $Q(\delta \Phi)$ the solution of:
\beq\label{eq800}{\mathcal M}\ddot Q-\;^{t}\hskip -0.05cm{\mathcal C}\dot Q+\;^{t}\hskip-0.06cm{\mathcal K}Q=0,\;Q(0)=\delta \Phi_0,\;\dot Q(0)=\delta \Phi_1.\eeq
By multiplying by $Q$ the state equation satisfied by $X$ and by integrating from $0$ to $T$ we obtain:
\beq\label{eq810}\hskip-0.5cm\left \{\begin{array}{l}(\Lambda (\Phi),\delta \Phi)=-({\mathcal M}\dot X(T)+{\mathcal C}X(T),Q(T)) +({\mathcal M} X(T),\dot Q(T))+\\\\({\mathcal M}\dot X(0)+{\mathcal C}X(0),\delta\Phi_0)-({\mathcal M} X(0),\delta\Phi_1)+\intj_0^T({\mathcal F}(s),Q(s))ds.\end{array}\right. \eeq
We set:
\beq\label{eq820}\hskip-0.5cm\left \{\begin{array}{l}L\in \a R^{2N},\;\hbox{ such that }:\;\forall \delta \Phi\in \a R^{2N},\\\\(L,\delta \Phi)\hskip-0.05cm=\hskip-0.05cm\intj_0^T\hskip-0.05cm({\mathcal F}(s),Q(s))ds\hskip-0.05cm+\hskip-0.05cm({\mathcal M}\dot X(0)+{\mathcal C}X(0),\delta\Phi_0)\hskip-0.05cm-\hskip-0.05cm({\mathcal M} X(0),\delta\Phi_1).\end{array}\right.\eeq
From Theorem \ref{th10} we can define uniquely $\Phi\in \a R^{2N}$ such that:
\beq\label{eq830}\Lambda (\Phi)=L.\eeq
With this choice of $\Phi$ we deduce from (\ref{eq810}) that:
\beq\label{eq840}\forall \delta \Phi\in \a R^{2N},\;-({\mathcal M}\dot X(T)+{\mathcal C}X(T),Q(T)) +({\mathcal M} X(T),\dot Q(T))=0,\eeq
One can choose for instance $\delta \Phi=(Q(0),\dot Q(0))\in \a R^{2N}$ where $Q$ is solution of the retrograde equation:
\beq\label{eq850}{\mathcal M}\ddot Q-\;^{t}\hskip -0.05cm{\mathcal C}\dot Q+\;^{t}\hskip-0.06cm{\mathcal K}Q=0,\;Q(T)=-{\mathcal M}\dot X(T)+{\mathcal C}X(T),\;\dot Q(T)={\mathcal M} X(T),\eeq
which implies that $X(T)=\dot X(T)=0$. Hence the choice of $\Phi$ solution of (\ref{eq830}) leads to an exact control $u(\Phi)$.
\subsubsection{Convergence of $u^\varepsilon$ to $u^0$.} First of all from the definition of the optimal control model one has:
\beq\label{eq900}\forall v\in {\Big [}H^1_0(]0,T[){\Big ]}^p,\;\;J^\varepsilon(u^\varepsilon)\leq J^\varepsilon (v).\eeq
Choosing $v=u^0$, which is an exact control, leads to the two inequalities:
\beq\label{eq810}\hskip-.3cm\left \{\begin{array}{l} \vert  X^\varepsilon (T)\vert_{\a R^{2N}}^2+\vert  \dot X^\varepsilon (T)\vert_{\a R^{2N}}^2\leq \varepsilon \intj_0^T{\Big [}\alpha \vert u^0 \vert_{\a R^p}^2+\beta\vert\fracj{du^0}{dt}\vert_{\a R^p}^2+2\gamma \vert\fracj{du^0}{dt}\vert_{\a R^p}{\Big ]},\\\\ \hbox{and}\\\\
\hskip-.2cm \intj_0^T\hskip-.1cm{\Big [}\alpha \vert u^\varepsilon \vert_{\a R^p}^2\hskip-.05cm+\hskip-.05cm\beta\vert\fracj{du^\varepsilon}{dt}\vert_{\a R^p}^2\hskip-.05cm+\hskip-.05cm 2\gamma \vert\fracj{du^\varepsilon}{dt}\vert_{\a R^p}{\Big ]}\hskip-.1cm \leq \hskip-.1cm \intj_0^T\hskip-.1cm{\Big [}\alpha \vert u^0 \vert_{\a R^p}^2\hskip-.05cm +\hskip-.05cm \beta\vert\fracj{du^0}{dt}\vert_{\a R^p}^2\hskip-.05cm +\hskip-.05cm 2\gamma \vert\fracj{du^0}{dt}\vert_{\a R^p}{\Big ]}.\end{array}\right.\eeq
These estimates prove that $u^\varepsilon$ is bounded in the space ${\Big [}H^1_0(]0,T[){\Big ]}^p$ versus $\varepsilon$. From the state equation satisfied by $X^\varepsilon$ (an ordinary differential equation), one can claim that this state variable is bounded  in the space ${\Big [}H^2(]0,T[){\Big ]}^N\subset {\Big [}{\mathcal C}^1([0,T]){\Big ]}^N$.\\\\ Furthermore:
$$ \limj_{\varepsilon\rightarrow 0} X^\varepsilon(T)=\limj_{\varepsilon \rightarrow 0}\dot X^\varepsilon(T)=0 \hbox{ in }\a R^N.$$
From (\ref{eq810}) one can state that there is a subsequence of $u^\varepsilon$  denoted by $u^{\varepsilon '}$ which converges weakly to an element $u^*\in {\Big [}H^1_0(]0,T[){\Big ]}^p$ (for the weak topology of this space). Hence $X^{\varepsilon '}$converges to an element $X^*$ which satisfies $X^*(T)=\dot X^*(T)=0$. Therefore $u^*$ is an exact control in ${\Big [}H^1_0(]0,T[){\Big ]}^p$. The set of these exact controls is denoted by $E_{ex}$. Using the lower-semi continuity of continuous convex functions with respect to the weak topology, we can claim from (\ref{eq810}) that:
\beq\label{eq820}\hskip-.4cm\begin{array}{l}\intj_0^T\hskip-.1cm\alpha\vert  u^*\vert_{\a R^p}^2\hskip-.1cm+\hskip-.1cm\beta \vert \fracj{du^*}{dt}\vert_{\a R^p}^2\hskip-.1cm+\hskip-.1cm2\gamma\vert \fracj{du^*}{dt}\vert_{\a R^p}\leq \hskip-.1cm\minj_{v\in E_{ex}}\intj_0^T\alpha\vert v\vert_{\a R^p}^2+\beta \vert \fracj{dv}{dt}\vert_{\a R^p}^2+2\gamma\vert \fracj{dv}{dt}\vert_{\a R^p}\\\\\leq \intj_0^T\alpha\vert u^0\vert_{\a R^p}^2+\beta \vert \fracj{du^0}{dt}\vert_{\a R^p}^2+2\gamma\vert \fracj{du^0}{dt}\vert_{\a R^p}.\end{array}\eeq
In other words, $u^*$ is an exact control which minimizes the cost of the control. Because of the strict convexity of his cost function, we can claim that $u^*$ is the unique control which minimizes the cost function.
%
%
In addition, recalling that $u^0$ is solution of (\ref{eq611}) where we choose $v=u^*$:
\beq\label{eq830}\begin{array}{l} \intj_0^T\alpha(u^0-u^*,.u^0)+\beta(\fracj{du^0}{dt}-\fracj{du^*}{dt},\fracj{du^0}{dt})+\gamma (\vert\fracj{du^0}{dt}\vert_{\a R^p}-\vert\fracj{du^*}{dt}\vert_{\a R^p})\leq \\\\\intj_0^T(\;^{t}\hskip-0.05cm{\mathcal B}Q(\Phi),(u^*-u^0)=\intj_0^T(\hskip-0.05cm{\mathcal B}(u^*-u^0),Q(\Phi))=\\\\\intj_0^T({\Big (}{\mathcal M}(\ddot X^*-\ddot X^0)+{\mathcal C}(\dot X^*-\dot X^0)+{\mathcal K}(X^*-X^0){\Big )},Q)=0,\end{array}\eeq
we obtain:
\beq\label{eq840}\begin{array}{l}\intj_0^T\alpha\vert  u^0\vert_{\a R^p}^2+\beta\vert \fracj{du^0}{dt}\vert_{\a R^p}^2+\gamma\vert\fracj{du^0}{dt}\vert_{\a R^p}\leq \\\\\intj_0^T\alpha (u^0,u^*)+\beta (\fracj{du^0}{dt},\fracj{du^*}{dt})+\gamma\vert\fracj{du^*}{dt}\vert_{\a R^p}\end{array}\eeq
and from Cauchy-Schwarz triangular inequality\footnote{$\forall a,b\in \a R,\;\theta\in \a R^{*,+},\;2ab\leq \theta a^2+\fracj{1}{\theta}b^2$}
:
\beq\label{eq850}\begin{array}{l}\fracj{1}{2}\intj_0^T\alpha\vert u^0\vert_{\a R^p}^2+\beta \vert\fracj{du^0}{dt}\vert_{\a R^p}^2+2\gamma\vert\vert\fracj{du^0}{dt}\vert\\\\\leq \fracj{1}{2}\intj_0^T\alpha\vert u^*\vert_{\a R^p}^2+\beta \vert\fracj{du^*}{dt}\vert_{\a R^p}^2+2\gamma\vert\fracj{du^*}{dt}\vert_{\a R^p}\end{array}\eeq
Finally, we proved that $u^0$ is also a minimizer of the cost function  among  the exact controls. Because of the uniqueness of the minimizer of a strictly convex function, we proved that $u^*=u^0$. Hence, from the uniqueness of the limit,  all the sequence $u^\varepsilon$ converges weakly to the control $u^0$.

\subsubsection{The strong convergence of $u^\varepsilon$ to $u^0$}
Let us observe that:
\beq\label{eq860}\begin{array}{l}\intj_0^T\alpha\vert u^\varepsilon-u^0\vert_{\a R^p}^2+\beta\vert \fracj{du^\varepsilon}{dt}-\fracj{du^0}{dt}\vert_{\a R^p}^2=\\\\\intj_0^T\alpha\vert  u^\varepsilon\vert_{\a R^p}^2+\beta\vert \fracj{du^\varepsilon}{dt}\vert_{\a R^p}^2+2\gamma \vert\fracj{du^\varepsilon}{dt}\vert_{\a R^p}-2\gamma\vert\fracj{du^\varepsilon}{dt}\vert_{\a R^p}\\\\-2\intj_0^T\alpha (u^\varepsilon,u^0)-2\beta (\fracj{du^\varepsilon}{dt},\fracj{du^0}{dt})+\intj_0^T\alpha \vert  u^0\vert_{\a R^p}^2+\beta \vert \fracj{du^0}{dt}\vert_{\a R^p}^2,\end{array}\eeq
From (\ref{eq810}):
\beq\label{eq870}\begin{array}{l}\intj_0^T\alpha\vert u^\varepsilon-u^0\vert_{\a R^p}^2+\beta \vert\fracj{du^\varepsilon}{dt}-\fracj{du^0}{dt}\vert_{\a R^p}^2\leq\intj_0^T\alpha\vert  u^0\vert_{\a R^p}^2+\beta \vert\fracj{du^0}{dt}\vert_{\a R^p}^2+2\gamma \vert\fracj{du^0}{dt}\vert_{\a R^p}\\\\-2\gamma\vert\fracj{du^\varepsilon}{dt}\vert_{\a R^p}-2\intj_0^T\alpha (u^\varepsilon,u^0)-2\beta (\fracj{du^\varepsilon}{dt},\fracj{du^0}{dt})+\intj_0^T\alpha  \vert u^0\vert_{\a R^p}^2+\beta \vert  \fracj{du^0}{dt}\vert_{\a R^p}^2,\end{array}\eeq
and at the limit when $\varepsilon \rightarrow 0$ because of the weak convergence of $u^\varepsilon$ to $u^0$ and using the lower semi-continuity for the weak topology in the space $H_0^1(]0,T[)$ of the continuous and convex function $\intj_0^T\vert\fracj{du^\varepsilon}{dt}\vert_{\a R^p}$:
\beq\label{eq880}\limj_{\varepsilon\rightarrow 0}\intj_0^T\alpha\vert  u^\varepsilon-u^0\vert_{\a R^p}^2+\beta \vert\fracj{du^\varepsilon}{dt}-\fracj{du^0}{dt}\vert_{\a R^p}^2=0,\eeq
which proves the strong convergence of $u^\varepsilon$ to $u^0$ in the space ${\Big [}H^1_0(]0,T[){\Big ]}^p$. This completes the {\it pseudo} asymptotic analysis of the optimal control problem when the marginal cost of the control $\varepsilon \rightarrow 0$.
\subsubsection{An algorithm for computing $u^0$} One can use the algorithm used for proving the existence and uniqueness of $\Phi$ solution of (\ref{eq830}). Let us summarize it:
{\it 
\begin{enumerate}[step ]
\item 1 Let us start with $\Phi^0=0$ and $u^0=0$; $n=0$;
\item 2 Let us assume that $\Phi^n$ and $u^n$ are known. Let us compute $X(\Phi^n)$  solution of: 
$$\left \{\begin{array}{l}{\mathcal M}\ddot X(\Phi^n)+{\mathcal C}\dot X(\Phi^n)+{\mathcal K}X(\Phi^n)=F+{\mathcal B}u^n\\\\X(\Phi^n)(0)=X_0,\;\dot X(\Phi^n)(0)=X_1;\end{array}\right.$$
\item 3 Compute $P(\Phi^n)$ solution of:
$$\left \{\begin{array}{l}{\mathcal M}\ddot P(\Phi^n)-\;^{t}\hskip-0.05cm{\mathcal C}\dot P(\Phi^n)+\;^{t}\hskip-0.05cm{\mathcal K}P(\Phi^n)=0,\\\\P(\Phi^n)(0)=\Phi^n_0,\;\dot P(\Phi^n)(0)=\Phi^n_1;\end{array}\right.$$
 \item 4 Compute $u^{n+1}=u(\Phi^n)\in {\Big [}H^1_0(]0,T[){\Big ]}^p$ (using the duality algorithm of R. Glowinski with $\lambda(\Phi^n)\in B_1$) solution of:
 $$\left \{\begin{array}{l}\forall v\in {\Big [}H^1_0(]0,T[){\Big ]}^p,\\\\\intj_0^T\hskip-0.05cm\alpha (u(\Phi^n),v)\hskip-0.05cm+\hskip-0.05cm\beta(\fracj{du(\Phi^n)}{dt},\fracj{dv}{dt})\hskip-0.05cm+\hskip-0.05cm\gamma (\lambda(\Phi^n),\fracj{dv}{dt})\hskip-0.05cm=\hskip-0.05cm-\intj_0^T(\;\hskip-0.1cm^{t}\hskip-0.05cm{\mathcal B}P(\Phi^n),v),\\\\
 \forall \mu\in B_1,\;(\mu-\lambda(\Phi^n)).\fracj{du(\Phi^n)}{dt}\leq 0\end{array}\right.$$ 
 \item 5 Compute $\Lambda(\Phi^n)-L$;
 \item 6 Set: $\Phi^{n+1}=\Phi^n-\varrho[\Lambda(\Phi^n)-L)$;
 \item 7 Test the convergence to zero of $\Phi^{n+1}-\Phi^n$;
 \item 9 If no convergence replace $\Phi^n$ by $\Phi^{n+1}$ and go to the second step;
 \end{enumerate}
 }
\begin{remark}\label{rem300} In practice the convergence in $\Phi$ is very slow because an ill conditioning of the operator $\Lambda$. Hence one efficient possibility is to use a preconditioning  using the operator $\Lambda^0$ similar to $\Lambda$ but obtained for $\gamma=0$. In this case, the inverse of $\Lambda^0$ is easy to compute (or to factorize). This enables to readjust all the components of $X$ with similar values and to get a satisfying convergence. \hfill $\Box$
\end{remark}
\begin{remark}\label{rem20} The choice of $\varrho$ should be optimized at each step (see J. Cea \cite{CEA}) in order to accelerate the convergence.\hfill $\Box$\end{remark}
 \subsubsection{A brief discussion on the role of the norm of the term: $\fracj{du}{dt}$}
Let us consider a non-empty time interval $]t_1,t_2[\subset [0,T]$ such that the component $k$ of the control $u^0$ is such that one has:  $u_k^0(t_1)=u_k^0(t_2)$, (let us recall that this make sense because $H^1(]0,T[)\subset {\mathcal C}^0([0,T])$). Obviously $t_1=0$ and $t_2=T$ is a possibility.  We introduce the following virtual  control $v$ defined on the whole segment  $[0,T]$:
\beq\label{eq191}\forall j=1,p,\;j\neq k,\;v_j=u_j^0\hbox{ and }v_k=\left \{\begin{array}{l}u_k^0 \hbox{ on } ]0,t_1[;
\\\\
u^0_k(t_1)=u^0_k(t_2)=d_k\hbox{ on }[t_1,t_2];
\\\\
v=u_k^0\hbox{ on } ]t_2,T[.
\end{array}\right.\eeq
Introducing this value of $v$ in (\ref{eq611}) we obtain (the derivative of $v_k$ on $[t_1,t_2]$ is zero):
\beq\label{eq192}\intj_{t_1}^{t_2}\alpha\vert d_k-u^0_k\vert^2+\beta\vert \fracj{du_k^0}{dt}\vert^2+\gamma\vert\fracj{du_k^0}{dt}\vert\leq\intj_{t_1}^{t_2}(\;^{t}\hskip-0.05cm{\mathcal B}P^1)_k+\alpha d_k,d_k-u_k^0).\eeq
From:
$$(d_k-u^0_k)(t)=-\intj_{t_1}^t\fracj{du^0_k}{dt}(s)ds,$$
or else:
$$\intj_{t_1}^{t_2}\vert d_k-u^0_k\vert\leq (t_2-t_1)\intj_{t_1}^{t_2}\vert\fracj{du^0_k}{dt}\vert,$$
and inserting this estimate in the right-hand side of (\ref{eq192}), we deduce that:
\beq\label{eq193}\gamma\intj_{t_1}^{t_2}\vert \fracj{du^0_k}{dt}\vert\leq {\Big [}(t_2-t_1)\vert\vert (\;^{t}\hskip-0.05cm{\mathcal B}P^1)_k\vert\vert_{0,\infty,]t_1,t_2[}+\alpha d_k{\Big ]}\;\;\intj_{t_1}^{t_2}\vert \fracj{du^0_k}{dt}\vert.\eeq
But $P^1$ and $u^0$ are solution of a system depending only on the data $X_0,X_1$(the initial condition of the state variable $X$) and ${\mathcal F}$ (the right-hand side of the state equation). Hence from the previous analysis there exists a constant $c_6>0$ (which could be estimated) such that:
\beq\label{eq194}\left \{\begin{array}{l}\gamma \intj_{t_1}^{t_2}\vert \fracj{du^0_k}{dt}\vert\leq\\\\{\Big [}c_6(t_2-t_1)(\vert X_0\vert_{\a R^{2N}}+\vert X_1\vert_{\a R^{2N}}+\vert\vert {\mathcal F}\vert\vert_{0,2,]0,T[})+\alpha d_k{\Big ]}\intj_{t_1}^{t_2}\vert \fracj{du^0_k}{dt}\vert.\end{array}\right.\eeq
Hence we can state the following property:
\beq\label{eq195}\begin{array}{l} \gamma> c_6(t_2-t_1)(\vert X_0\vert_{\a R^{2N}}+\vert X_1\vert_{\a R^{2N}}+\vert\vert {\mathcal F}\vert\vert_{0,2,]0,T[})+\alpha d_k \\\\
\hskip 3cm=>\;u_k^0=d_k\hbox{ on }[t_1,t_2]\end{array}\eeq
One can observe that the coefficient $\alpha$ gives {\it a security gap} to the smoothing algorithm. It enables to expand the range given by the initial data and the right-hand side. In fact, one can say that the tuning of the method involves the adjustment of  $\alpha$ and $\gamma$ versus the initial data. \\
This property of smoothing  the control is the goal of the term in factor of $\gamma$. It cancels  or at least reduces the variations of the control between $t_1$ and $t_2$ if $\gamma$ is large enough compared to the data $X_0,X_1,{\mathcal F}$ for a given value of the parameter $\alpha$. On can  check this result on the numerical tests (see Figures \ref{fig1bis}-\ref{fig3bis}-\ref{fig4bis}). 
%
\section{Numerical approximation of the control model}
All the variables (the state function $X$, the adjoint state function $P^1$ and the control $u^0$ or the multiplier $\lambda^0$ are represented by piecewise linear function on $[0,T]$ which is split into sub-segments with the same length $\Delta t$. The solution of the differential equation (for $X$ and $P^1$) are solved by the unconditionally stable Newmark scheme \cite{NEWMARK}. It is worth to point out that even if the direct model is stable (and this is not always the case), the model  used for computing the adjoint state $P^1$, is not necessarily stable as far as the damping change of sign unless we solve it using the retrograde scheme (starting from $T$ instead of $0$). Furthermore let us underline that it can occur that the direct model is unstable because of a negative damping (stall flutter mechanism as described in E.H.  Dowell \cite{DOWELL}). Hence the choice of the Newmark parameters (see P.A. Raviart and J.M. Thomas \cite{NEWMARK})  are almost necessarily: $\beta=.25$ and $\gamma=.5$. The solution method for computing $(u^0,\lambda^0)$ when $P^1$ is given is the following one (Uzawa algorithm \cite{GLOW}):
\begin{itemize}[$\#$]
\item start from $\lambda^{0,0}=0$ and $u^{0,0}$ solution of:
$$\forall v\in {\Big [}H^1_0(]0,T[){\Big ]}^p,\;\intj_0^T\alpha (u^{0,0},v)+\beta (\fracj{du^{0,0}}{dt},\fracj{dv}{dt})=-\intj_0^T(\;^{t}\hskip-0.05cm {\mathcal B}P^1,v);$$
\item assuming $u^{0,n}$ known, we iterate $\lambda^0$ by:
$$\lambda^{0,n+1}=\lambda^{0,n}-\varrho \fracj{du^{0,n}}{dt},\hbox{ where }\varrho >0; $$
\item compute $u^{0,n+1}$ by solving:
$$\left \{\begin{array}{l}\forall v\in {\Big [}H^1_0(]0,T[){\Big ]}^p,\\\\\intj_0^T\alpha (u^{0,n+1},v)+\beta (\fracj{du^{0,n+1}}{dt},\fracj{dv}{dt})=-\gamma \intj_0^T(\lambda^{0,n+1},\fracj{dv}{dt})-\intj_0^T(\;^{t}\hskip-0.05cm {\mathcal B}P^1,v);\end{array}\right.$$
\item test on the convergence of $\lambda^{0,n+1}$  if yes => stop, else go to the next item;
\item $n=n+1$ go to the second item;
\end{itemize}
\section{Numerical tests on a simple model}\label{sect5} Our first test is a simple mechanical system with two springs and two masses. The goal is just to illustrate the behavior of the algorithm in the most simple case (see Figure \ref{fig0}) before applying it to a more complex mechanical situation where instabilities can occur due to a negative damping term.\\\\
\begin{figure}[htbp] 
   \centering
   \includegraphics[width=7cm,height=2.3cm]{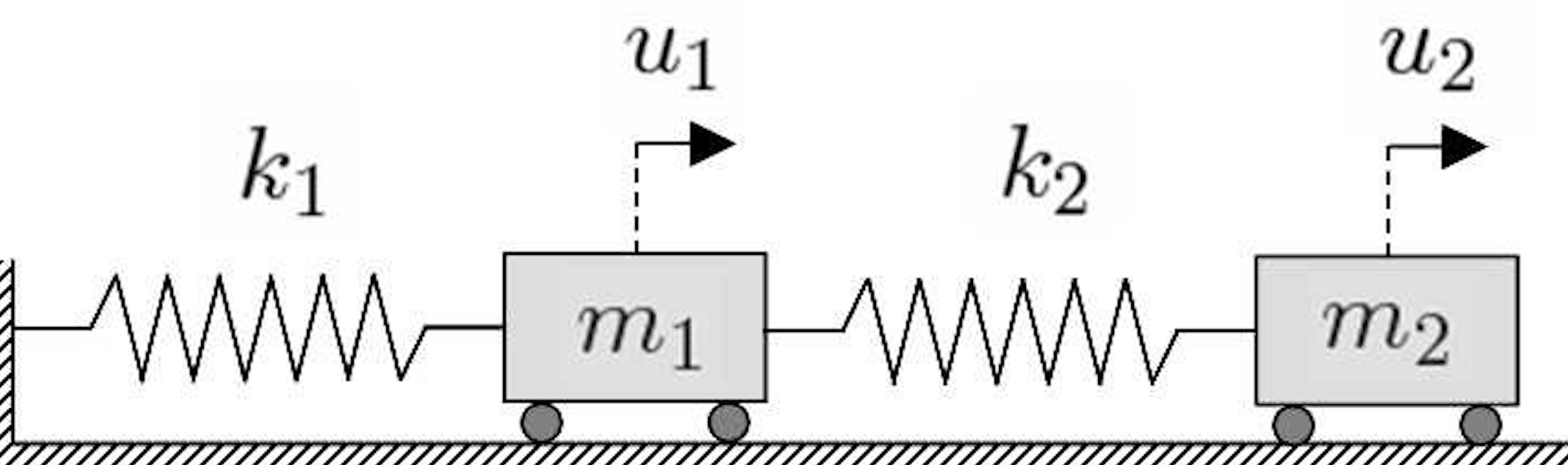} 
   \caption{The elementary model for checking the algorithm}
   \label{fig0}
\end{figure}
\\
\noindent  The left Figure \ref{fig1} represents the evolution of the displacements $x_1$ and $x_2$ without control. The right one represents the same components but with the exact control introduced in this text and for two values of the coefficient $\gamma$.  Both are plotted versus the time (see Figure \ref{fig0} for the description of the mechanical system). The time $T$ has been chosen  arbitrary fixed equal to $2.6\;s$ (approximately twice the largest period of the system) in order to avoid too many oscillations on the graphs. But similar results can be obtained for any values of $T$. The  different curves correspond to various values of the coefficient $\gamma$ in the control criterion. One can see that the variations of the two components of the state variables are reduced when $\gamma$ increases. Nevertheless the controls are always exact at time $T$.\\ The number of degrees of freedom ({\it dof}) is not meaningful, it could be larger without new difficulties. The ill conditioning is mainly due to the ratio between the values of the displacements and their derivatives. This is why a preconditioning has been used in the case of the  boat models  (see Remark \ref{rem300}), in order to speed up the convergence of the algorithm in the computation of $\Phi$. In the applications to the flying boats that we consider in this text, two {\it dof} are sufficiently representative of the physical behavior of the system (heaving and pitching) this is why we restrict our tests to two {\it dof} as the authors of \cite{LUNA} have suggested. 
\begin{figure}[htbp] 
   \centering
   \includegraphics[width=6.2cm,height=5cm]{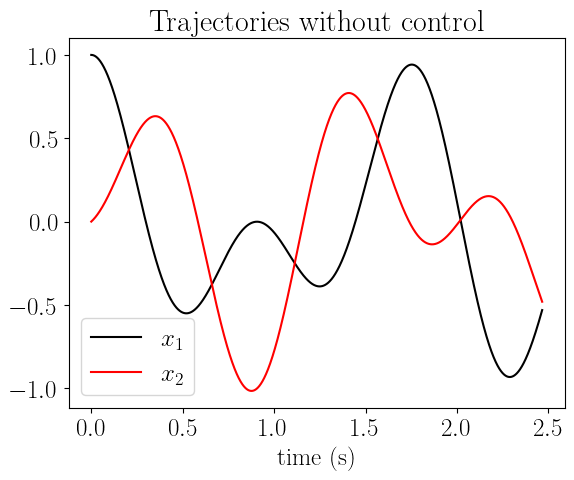}  \includegraphics[width=6.2cm,height=5cm]{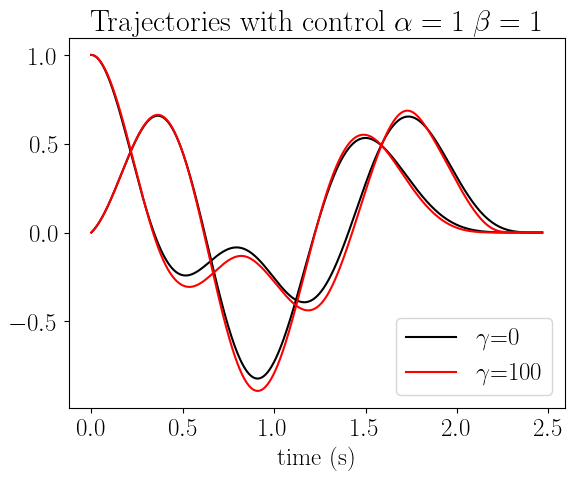} 
   \caption{Solutions for the two {\it dof} simple system without control (left) and with one control (right)}
   \label{fig1}
\end{figure}
\\\\
The control function (only one control is used in this simple example and it is applied to the first mass) are plotted on Figures \ref{fig1bis}. One can see the effect of $\gamma$ which reduces the variations of the control (due to the {\it term which restricts the variations of the control}). And this was precisely the goal of this study.
\begin{figure}[htbp] 
   \centering
   \includegraphics[width=6.1cm,height=5.1cm]{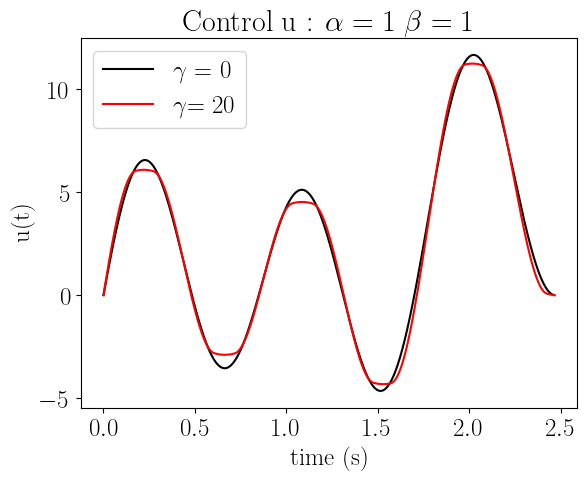} \includegraphics[width=6.1cm,height=5.1cm]{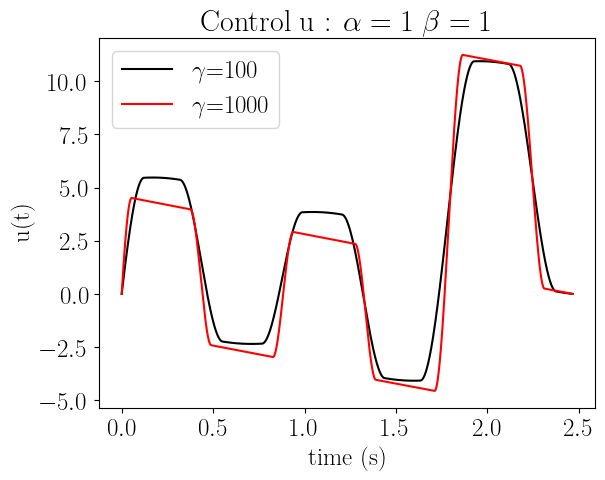} 
   \caption{Control functions for several values of $\gamma$ : 0, 20,100 and 1000   }
   \label{fig1bis}
\end{figure}
\\\\
The flattening of the control is prescribed by the term involving the first order derivative of the control. It appears on the numerical tests that there is an optimal value for the coefficient $\gamma$ for $\beta$ and $\alpha$ given. But these coefficients should be tuned once for all in practical applications. It is remarkable that for $\gamma$ important the exact control looks like (almost) a bang-bang control that one obtains when the time $T$ is sufficiently small  and for control bounds given. The reduction of the oscillations on the control has two positive effects: one is the reduction of energy consumed by the regulation loop; the second one is a reduction of the fatigue of the control device.
\section{Numerical tests for a simplified boat model}\label{section6} The description of the model in given in the Annex 1. This leads to the matrices ${\mathcal M},\;{\mathcal C},\;{\mathcal K}$ and ${\mathcal B}$ which derived from a mechanical modeling, but also -and may be mainly- to the expression of the hydrodynamical forces and their derivatives versus $X$ and $\dot X$. The vector $X$ is in $\a R^2$ and represent the heaving and the pitching of the boat (see Figure \ref{figA3}).  In this case we have two controls. One is a flap at the rear of the main foils and the other one is also a flap positioned at the rear foil fixed on the rudder. Due to the leverage the amplitude of the rear control is less than the one of the main foil. But it is necessary as far as there can be singularities due to the damping matrix ${\mathcal C}$. This is a strange phenomenon which is scarcely mentioned in the engineering publications  but which can be determinant in a control loop (see E. H. Dowell \cite{DOWELL} and Ph. Destuynder-C. Fabre \cite{DESTU}). It can be at the origin of negative damping sometimes interpreted as a stall flutter and has been at the origin of several problems in aeronautics.
\\
\begin{figure}[htbp] 
   \centering
   \includegraphics[width=6.1cm,height=5cm]{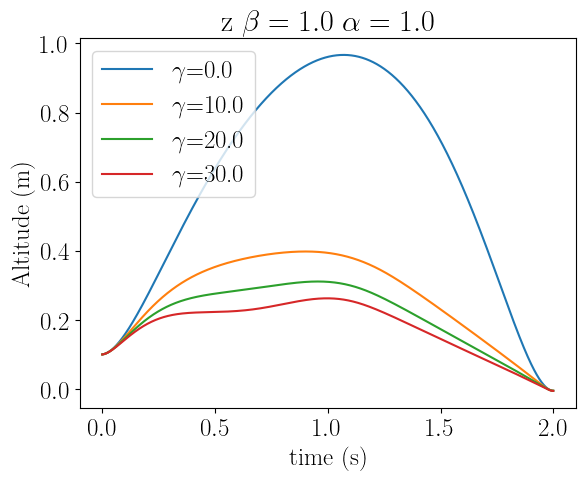}\includegraphics[width=6.1cm,height=5cm]{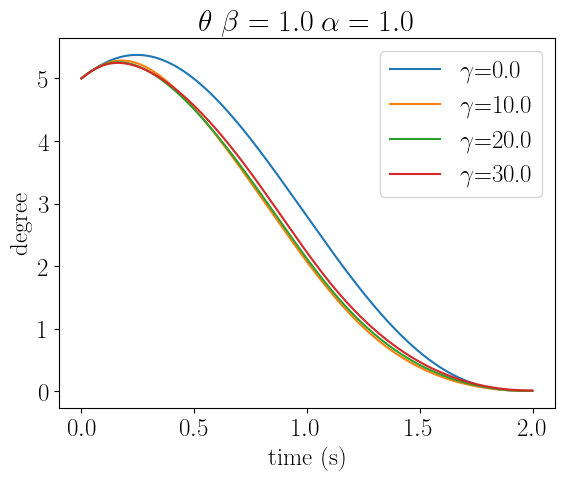} 
   \caption{Controlled trajectories for the LaSIE boat (T=1s) }
   \label{fig3}
\end{figure}
\begin{figure}[htbp] 
   \centering
   \includegraphics[width=6.1cm,height=5.2cm]{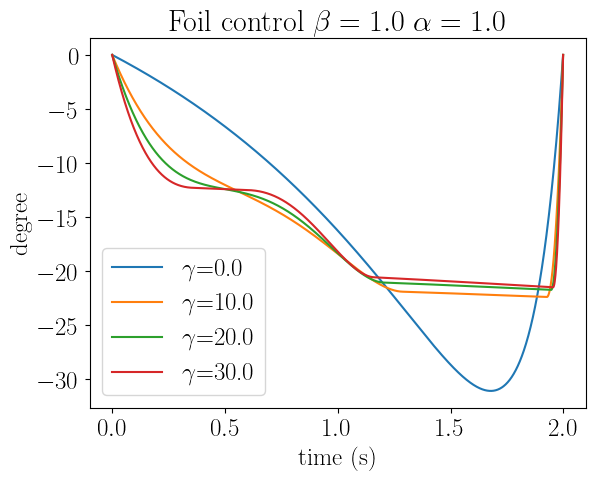}\includegraphics[width=6.1cm,height=5cm]{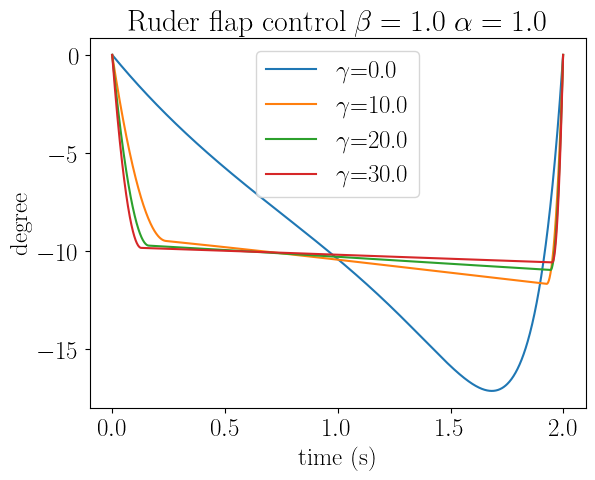} 
   \caption{Controls for the LaSIE boat (T=1s)}
   \label{fig3bis}
\end{figure}
\begin{figure}[htbp] 
   \centering
   \includegraphics[width=6.1cm,height=5.2cm]{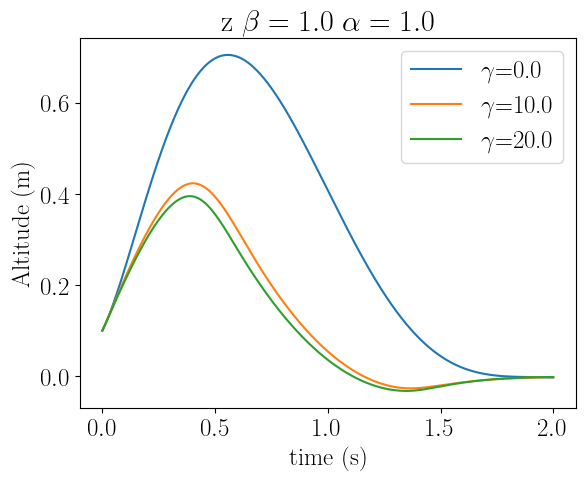}\includegraphics[width=6.1cm,height=5cm]{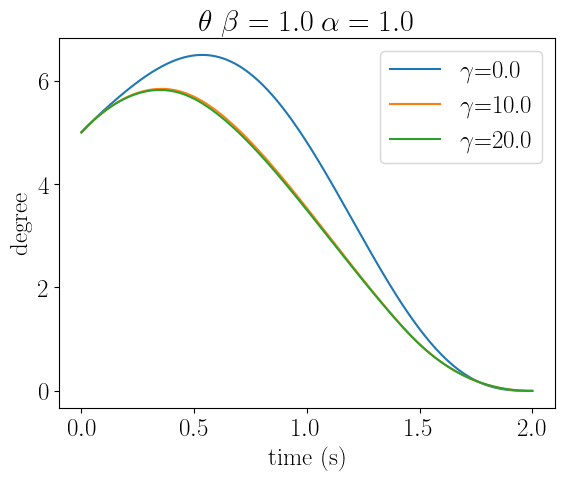} 
   \caption{Controlled trajectories for the LaSIE boat (T=2s)}
   \label{fig4}
\end{figure}
\begin{figure}[htbp] 
   \centering
   \includegraphics[width=6.1cm,height=5.2cm]{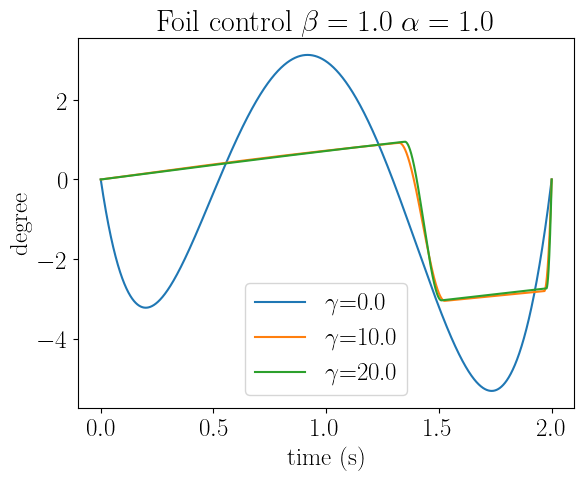}\includegraphics[width=6.1cm,height=5cm]{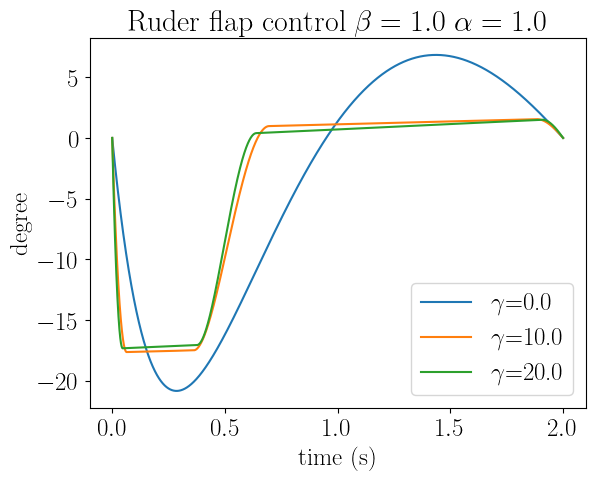} 
   \caption{Controls for the LaSIE boat (T=2s)}
   \label{fig4bis}
\end{figure}
\noindent The effect of $\gamma$ is very significant on this example. The tests represented on Figures \ref{fig3}-\ref{fig4} for the heaving and the pitch angle are associated to the two controls drawn on Figures \ref{fig3bis}-\ref{fig4bis}. The curves have been plotted for different values of the coefficient $\gamma$ on each graph and for two kinds of initial perturbations: a heaving impact (variation of the wind velocity and therefore of the lift on Figures \ref{fig3}-\ref{fig3bis}) and a pitching impact (the boat hits a large wave on Figures \ref{fig4}-\ref{fig4bis}). The results on the control (rudder and foil) are spectacular and even if the control remains exact at time $T$, we obtain a control almost bang-bang for $\gamma$ large enough. Here again the total variation of the control is less than in the case where $\gamma=0$. This reaches the goal announced for this study. One can also combine different initial conditions simultaneously which would lead to similar results. \\

\begin{remark}\label{remsuralphabetagamma}
Concerning Figures \ref{fig4} and \ref{fig4bis} and for  values of $\gamma$ larger than  $30$, the curves have not been plotted because the algorithm did not converge sufficiently fast, according to the criteria we have prescribed (convergence of the control criterion and number of iterations). Note that for a set of values $\alpha = 1$,  $\beta = 5$, the algorithm works for much larger value of the parameter $\gamma$. As suggested at the end of Section \ref{sect5}, the coefficients $\alpha$, $\beta$ and $\gamma$ are parameters that should be adjusted. There is certainly an optimal set of values for which the control will be optimal from both a quality and computational point of view. This aspect will be investigated in future works. The aim of the current one was to suggest a regularizing control term that reduces oscillations. However, when we look at the results of Figures \ref{fig3} and \ref{fig3bis}, for a time $T=1s$, we notice that choosing $\gamma =30$ instead of $\gamma =20$ does not really change the controls. Furthermore, we notice that passing a threshold, increasing $\gamma$ does not really improve the efficiency of the control.\hfill $\Box$
\end{remark}
\noindent It would be certainly more realistic to introduce at least the rolling in the model. But the comparaisons with the results concerning Luna Rossa \cite{LUNA} are not possible because the authors of this article just kept the two {\it dof}:  heaving and pitching. Anyway a more industrial model can be used for customer applications. Nevertheless, the tuning of the sails could also be included as far as one has stored enough aerodynamics tables obtained from the experiment (real time practice, wind tunnel tests or computations). The full coupling with the {CFD}\footnote{Computing fluid dynamics} seems still to be out of reach for this situation.
\section{Tests for the Luna Rossa model from the article \cite{LUNA}}\label{section7} 
The boat studied in the article \cite{LUNA} is a catamaran as those used in the America'cup 2013 and 2017. We took the matrices included in the article and we applied our control strategy. On Figures \ref{fig5} we have plotted the results obtained for the heaving and the pitching. for various values of the parameter $\gamma$. One can see that the benefit of the algorithm with $\gamma>0$ is truly significant for the heaving (left Figure \cite{fig5}). The initial conditions can be observed on the curve (initial velocity on the heaving corresponding to a wind increase and initial perturbation on the pitching for both the initial angle and its velocity corresponding to a wave encountered by the boat.
\\
The control computed for various values of $\gamma$ are plotted on Figure \ref{fig5bis}. The control for the main foil  are plotted on the left Figure \ref{fig5bis} and those for the ruder foil on the right one. For each control we can see that the benefit of $\gamma$ is real and restricts  the variations of the control. Therefore the goal that was initially stated (reduction of variations for the controls) is reached by this algorithm.
\begin{figure}[htbp]
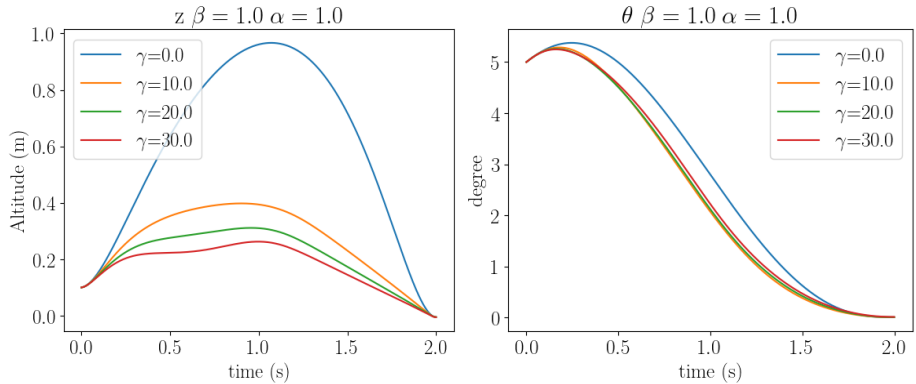
 
   \centering
   \includegraphics[width=6.1cm,height=5.2cm]{X1b1.0a1.0.png}\includegraphics[width=6.1cm,height=5.2cm]{X2b1.0a1.0.png} 
   \caption{Controlled trajectories for Luna Rossa (T=2s) }
   \label{fig5}
\end{figure}
\begin{figure}[htbp]
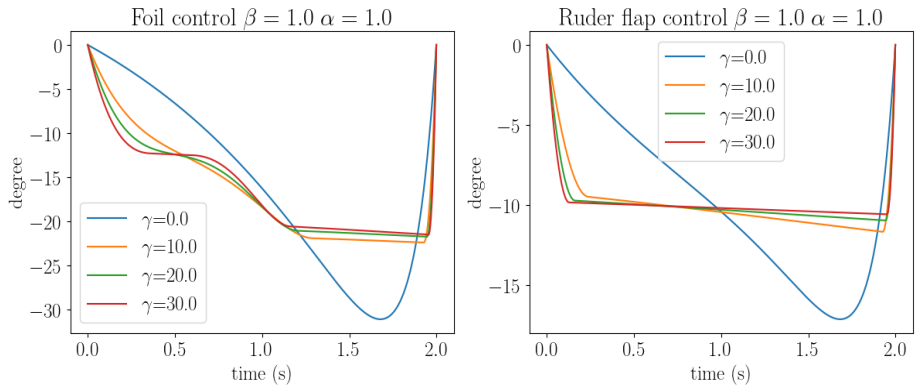
 
   \centering
   \includegraphics[width=6.1cm,height=5.2cm]{U1b1.0a1.0.png}\includegraphics[width=6.1cm,height=5.2cm]{U2b1.0a1.0.png} 
   \caption{Controls for  Luna Rossa (T=2s)}
   \label{fig5bis}
\end{figure}
\section{Conclusions} In this text we have studied a new kind of algorithms for the control of dynamical mechanical systems. The main point was to try to reduce the variations of the control. This could spare energy spent by the actuators and can also reduce the fatigue phenomenon for the control devices, mainly for the engine in charge of the control loop. The idea was to use a Tykhonov strategy \cite{TYKHONOV1}-\cite{TYKHONOV2} by adding a term in the cost functional of the optimal control problem  which is close to the total variations of the control. Furthermore, when the marginal cost of this functional (the famous $\varepsilon$ term \cite{LIONS1}) tends to zero,  we obtained an exact control as far as the classical controllability assumption is satisfied \cite{LIONS2}. The main difficulty came from the fact that the criterion is no more derivable and because of its similarity with the Bingham model for non Newtonian models as presented by G. Duvaut and J.L. Lions  \cite{DUVAUT}, we adopted a strategy close to the one suggested by R. Glowinski \cite{GLOW}. The method has been checked on a very simple model in a first step and then to a two degrees of freedom model for flying sailing boats. One (our model named the LaSIE model) is fully described in the Appendix of this  text and the other one has been developed by a research team \cite{LUNA} which has worked for the Luna Rossa challenge in the America's cup 2017. The results suggest that the strategy works well and this would justify to be extended to more complex systems in flight dynamics, mainly for instable vehicles (rockets, flying boats and aircrafts) for which there are many high frequency oscillations of the control systems..
 
\vskip 1cm
 \appendix
 \title{\Large APPENDIX }\\\\
 {\Large A simple flying boat model with two degrees of freedom}
 \\

\section{The mechanical principle of the model and the static equilibrium}
We use the orthonormal system of axis centered at point $G$ (the center of mass of the boat) represented on Figure \ref{figA3}. But this frame does not depend on neither the heaving $z$ nor the pitching angle $\theta$. It is just in a translation movement at the velocity $V$ in the direction $-{\bold e}_x$ (hence a Galilean system of axis). The water flow velocity in the previous frame attached to the boat at a translation velocity $V$, is therefore ${\bold v}_w=V{\bold e}_x$ (opposite to the movement of the boat when $z=0$ and $\theta=0$). In this frame the movement of the boat is parametrized  by the couple $(z,\theta)$ where $z$ is the heaving and $\theta$ is the pitching angle (rotation along ${\bold e}_y$). The velocity of the point $G$ in the frame used is ${\bold v}(G)=\dot z {\bold e}_z$. %

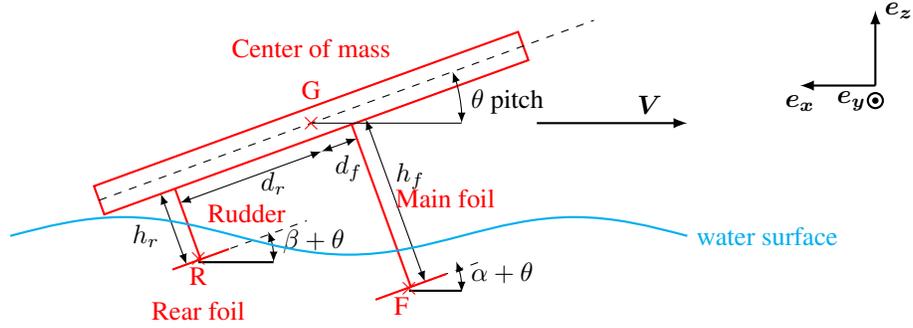
\begin{figure}[!htbp]
 \begin{center}
     \begin{tikzpicture}
     \draw[thick,-latex] (11.5,3.5)--(10.5,3.5);
     \node [below] at (10.5,3.5)  {$\boldsymbol{e_x}$};
     \draw[thick,-latex] (11.5,3.5)--(11.5,4.5);
     \node [right] at (11.5,4.5)  {$\boldsymbol{e_z}$};
     \draw[thick,fill=none](11.5,3.3) circle (0.1);
     \draw[fill=black](11.5,3.3) circle (1 pt);
     \node [left] at (11.5,3.3)  {$\boldsymbol{e_y}$};
     \draw[thick,red, rotate around={20:(4,3)}] (1,2.8) rectangle (7,3.2);
     \draw[dashed,rotate around={20:(4,3)}] (1,3) -- (8,3) ;
     \draw (4,3) -- (6,3) ;
     \draw [latex-latex](6,3) arc (0:20:2) ;
     \node [above right] at (6,3)  {$\theta$ pitch};
     \draw[thick,red, rotate around={20:(4,3)}] (2,2.8) -- (2,1.8);
	  \node [right, color=red] at (2.5,1.8) {Rudder};
	  \draw[latex-latex, rotate around={20:(4,3)}] (1.8,2.8) -- (1.8,1.8);
	  \node [right] at (1.5,1.5) {$h_r$};    
     \draw[dashed,rotate around={20:(4,3)}] (2,1.8) -- (3.5,1.8) ;
     \draw[thick,red, rotate around={20:(4,3)}] (1.6,1.8) -- (2.4,1.8) ;
     \draw[thick] (2.5,1.15) -- (3.5,1.15) ;
     \draw [latex-latex](3.5,1.15) arc (0:20:1.2) ;
    \node [above right] at (3.5,1.15)  {$\beta+\theta$};
     \draw[thick,red, rotate around={20:(4,3)}] (4.5,2.8) -- (4.5,0.5);
     \draw[latex-latex, rotate around={20:(4,3)}] (4.7,2.8) -- (4.7,0.5);
     \node [right, color=red] at (5,2) {Main foil};
     \node [right] at (5,2.3) {$h_f$};      
     \draw[dashed,rotate around={20:(4,3)}] (4.5,0.5) -- (5.5,0.5) ;
     \draw[thick,red, rotate around={20:(4,3)}] (4,0.5) -- (5,0.5) ;
    \draw [thick, color=cyan] (0,1.5) sin (1.5,1.75) cos (3,1.5) sin (4.5,1.25) cos (6,1.5)sin (7.5,1.75) cos (9,1.5);
    \node [right, color=cyan] at (9,1.5)  {water surface};
    \draw[latex-latex, rotate around={20:(4,3)}] (4,2.6) -- (4.5,2.6) ;
    \node at (4.5,2.4)  {$d_f$};
    \draw[latex-latex, rotate around={20:(4,3)}] (2,2.6) -- (4,2.6) ;
    \node  at (3.5,2.2)  {$d_r$};
     \node at (4,3)  {\textcolor{red}{$\times$}};
     \node [above] at (4,3.2)  {\textcolor{red}{G}};
     \node at (4,4)  {\textcolor{red}{Center of mass}};
     \draw[thick] (5.3,0.77) -- (6,0.77) ;
     \draw [latex-latex](6,0.75) arc (0:20:1.2) ;
     \node [above right] at (6,0.75)  {$\alpha+\theta$};
     \draw[thick,-latex] (7,3) -- (9,3) ;
     \node [above] at (8.5,3)  {$\boldsymbol{V}$};
     \node at (5.3,0.8)  {\textcolor{red}{$\times$}};
     \node [below] at (5.2,0.8)  {\textcolor{red}{F}};
     \node at (2.5,1.2)  {\textcolor{red}{$\times$}};
     \node [below] at (2.5,1.2)  {\textcolor{red}{R}};
     \node at (2.5,0.5)  {\textcolor{red}{Rear foil}};
\end{tikzpicture}
      \end{center}
   \caption{The notations used for the LaSIE boat}
   \label{figA3}
\end{figure}

\noindent First of all let us give a few indications on the foil used in this text. 
We choose a NACA 0012 \cite{NACA0012} which is a symmetrical airfoil (see Figure \ref{figA2}). The Eiffel coefficients for the lift and the drag are respectively estimated by the following  formulae where $\zeta$ is the angle of attack of the foil expressed in radian. One has:
\beq\label{eq35}\left \{\begin{array}{l}
c_L(\alpha)=(18/\pi)\zeta,\\\\
c_D(\alpha)=0.01(18/\pi)\zeta.\end{array}\right.
\eeq
 \begin{figure}[htbp] 
    \centering
    \includegraphics[width=10.cm,height=8.cm]{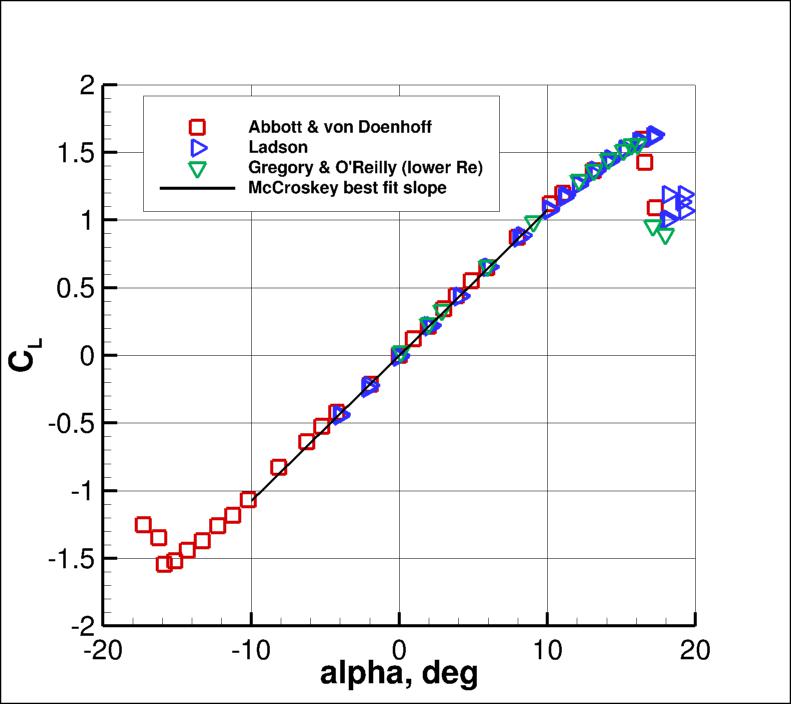} 
    \caption{Hydrodynamic lift for a NACA0012}
    \label{figA2}
 \end{figure}

\noindent The thrust force on the foils has (at least) two components. One is the drag which slow down the boat but which is quite reduced for small angles of attack of the foils. The other one is the lift which enables the boat to fly. At the equilibrium (no dynamics) this  vertical hydrodynamical forces equilibrates the weight of the boat and the tuning of the flaps are such that the pitching moment is zero at the center of mass $G$. This leads to the right values   for the angles of attack of the two foils: $\alpha_0$ for the main foil and $\beta_0$ for the rear foil fixed on the rudder. This equilibrium is traduced by the two relations written at equation (\ref{eq40}) where $S_r$ (respectively $S_f$) is the surface of the  immersed rear foil (respectively main foil). Furthermore $d_r$ and $d_f$ are defined on Figure \ref{figA3} ($d_f>0$ if $G$ is behind the main foil and $\varrho$ is the mass density of the water i.e. $1000kg/m^3$, $M$ is the mass of the boat and $g=9.81m/s^2$ the gravity acceleration):
 \beq\label{eq40}\left\{\begin{array}{l}
 \fracj{1}{2}\varrho V^2[S_rc_z(\beta_0)+S_fc_z(\alpha_0)]=Mg,
 \\\\
 \fracj{1}{2}\varrho V^2[-S_rd_rc_z(\beta_0)+S_fd_fc_z(\alpha_0)]=0.
 \end{array}\right. \eeq
 A simple computation leads to the following expressions (choosing the expression of the hydrodynamic coefficients given at (\ref{eq35}) and the angles $\alpha_0$ and $\beta_0$ are given in radian):
 \beq\label{eq50}\alpha_0=\fracj{36 Mg d_r}{\pi\varrho V^2S_f(d_r+d_f)},\;\;\;\beta_0=\fracj{36 Mgd_f}{\pi\varrho V^2S_r(d_r+d_f)}.\eeq
 %
\section{The dynamical model used for the control} 

The main point is to define a model which enables one to control the movement of the boat. Initially (in the America's cup of 2013) the whole main foil was rotating around a fixation point located on the so-called {\it daggerboard} which was at the level of the two hulls (it was a catamaran) and therefore outside of the water. For  recent America's cup boats the foils are equiped with rear flaps like on the wings of an airplane. But for sake of brevity in the mathematical formulation, we consider in this paper that  the foils are rotating around the attached point at the extremity of the arms bearing them. Concerning the main foil this arm plays also the role of a drift and in more complex models the roll angle of the foil is a control variable in order to reduce the yaw. The two movements considered in this simple example are only the heaving $z$ at the center of mass $G$  and the pitch angle  $\theta$ which is the rotation around the axis $e_y$ as said before. But the extension to more realistic engineering model would not change anything in the approach given in the following. Nevertheless, this would lead to more complex formula which are not necessary for the understanding but obviously highly required for a real use of the control method introduced in this text.
 If $M$ is a point on the boat, its velocity is:
  $${\bold v}(M)=\dot z {\bold e}_x+\dot \theta \;{\bold e}_y \wedge GM.$$
  Therefore the velocity of the center $R$ of the lifting plan of the rudder is: 
$${\bold v}(R)=(\dot z-d_r\dot \theta ){\bold e}_z-h_r\dot \theta\; {\bold e}_x$$
 and at point $F$ which is the center of the airfoil of the main foil: 
 $${\bold v}(F)=(\dot z+d_f\dot \theta){\bold e}_z-h_f\dot \theta {\bold e}_x.$$
 Hence the apparent flow velocity at $R$ (respectively $F$) is:
 \beq\label{eq30}{\bold v}^{a}(R)=V{\bold e}_x-{\bold v}(R)=(V+h_r\dot \theta){\bold e}_x+(d_r\dot \theta -\dot z){\bold e}_z,\eeq
 and respectively:
 \beq\label{eq31}{\bold v}^{a}(F)=V{\bold e}_x-{\bold v}(F)=(V+h_f\dot \theta){\bold e}_x -(d_f\dot \theta +\dot z){\bold e}_z.\eeq
 We deduce the expression of the modulus of the apparent velocity at points $R$ and $F$:
 \beq\label{31}\left\{\begin{array}{l}\vert\vert {\bold v}^{a}(R)\vert\vert^2=(V+h_r\dot \theta)^2+(\dot z-d_r\dot\theta)^2,\\\\
 \vert\vert {\bold v}^{a}(F)\vert\vert^2=(V+h_f\dot \theta)^2+(\dot z+d_f\dot\theta)^2.\end{array}\right.
 \eeq
 We use the velocities at points $R$ and $F$ for estimating the apparent angle of attack of the two foils. \\ %
 %
\begin{figure}[htbp] 
\begin{center}
     \begin{tikzpicture}
     \draw[thick,-latex] (6,4)--(5,4);
     \node [above] at (5,4)  {$\boldsymbol{e_x}$};
     \draw[thick,-latex] (6,4)--(6,5);
     \node [right] at (6,5)  {$\boldsymbol{e_z}$};
     \draw[thick,fill=none](6,4) circle (0.1);
     \draw[fill=black](6,4) circle (1 pt);
     \node [below right] at (6,4)  {$\boldsymbol{e_y}$};
     \draw (2,4)--(9,4);
     \draw[latex-latex] (2,4.5)--(6,4.5);
     \node[above] at (4,4.5) {$V+h_r \dot{\theta}$};
     \draw[rotate around={20:(6,4)}] (1.75,4) -- (10,4);
     \draw (2,4)--(2,2.5); 
     \draw(2,3.8) rectangle (2.2,4);
     \draw[-latex] (6,4)--(2,3.2);
     \node at (2.3,3.5) {$V^a_R$};
     \draw[latex-latex] (1.8,4)--(1.8,3.2);
     \node at (1,3.6) {$\dot{z} -d_r \dot{\theta}$};
     \draw [-latex](3.5,4) arc (180:200:2.5) ;
     \node at (3.2,3.8) {$\beta$};
     \draw [-latex](8,4) arc (0:20:2) ;
     \node at (8.5,4.2) {$\beta$};
     \draw [-latex](2.8,3.35) arc (190:200:3) ;
     \node at (2.5,2.9) {$\beta^a$};
      \node at (1,5) {Rear foil};
        \node at (9,3) {$\textrm{tan} \left(\beta - \beta^a\right)=\dfrac{\dot{z} -d_r \dot{\theta}}{V+h_r \dot{\theta}}$};
         \draw (1,2.4)--(10,2.4);
       %
     \draw[thick,-latex] (6,1)--(5,1);
     \node [above] at (5,1)  {$\boldsymbol{e_x}$};
     \draw[thick,-latex] (6,1)--(6,2);
     \node [right] at (6,2)  {$\boldsymbol{e_z}$};
     \draw[thick,fill=none](6,1) circle (0.1);
     \draw[fill=black](6,1) circle (1 pt);
     \node [below right] at (6,1)  {$\boldsymbol{e_y}$};
     \draw (2,1)--(9,1);
     \draw[latex-latex] (2,1.5)--(6,1.5);
     \node[above] at (4,1.5) {$V+h_f \dot{\theta}$};
     \draw[rotate around={20:(6,1)}] (1.75,1) -- (9,1);
     \draw [latex-] (2,1)--(2,-1); 
     \node at (1.8,0) {$V^a_f$};
     \draw[-latex] (6,1)--(2,0.2);
     \draw[latex-latex] (1.8,1)--(1.8,0.2);
     \node at (1,0.6) {$\dot{z} +d_f \dot{\theta}$};
     \draw [-latex](3.5,1) arc (180:200:2.5) ;
     \node at (3.2,0.8) {$\alpha$};
     \draw [-latex](8,1) arc (0:20:2) ;
     \node at (8.5,1.2) {$\alpha$};
     \draw [-latex](2.8,0.35) arc (190:200:3) ;
     \node at (2.5,-0.1) {$\alpha^a$};
      \node at (1,2) {Main foil};
        \node at (9,0) {$\textrm{tan} \left(\alpha - \alpha^a\right)=\dfrac{\dot{z} +d_f \dot{\theta}}{V+h_f \dot{\theta}}$};

      \end{tikzpicture}
      \end{center}
    \caption{Diagram of the apparent velocity at points $R$ or $F$ of the foils for LaSIE boat}
    \label{figA5}
 \end{figure}
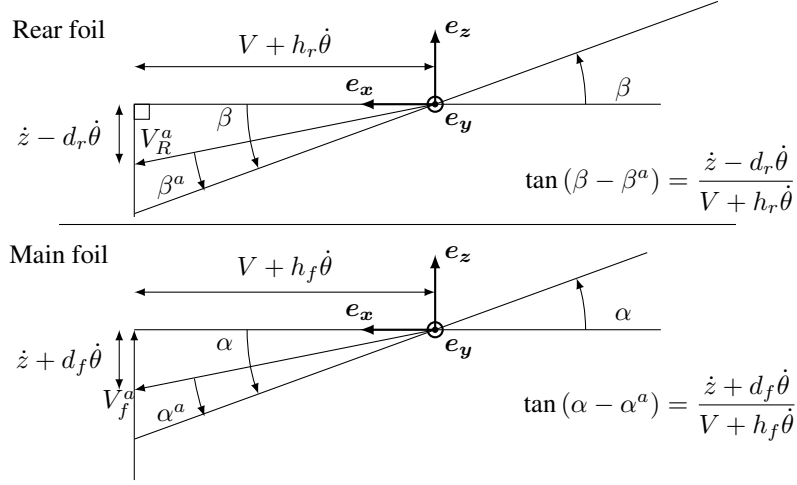
 According to Figure \ref{figA5} one has the folllowing expressions:\\
 - first of all, by taking the derivative of $\beta^{a}$ (respectively $\alpha^{a}$) with respect to the kinematical parameters $\dot z$ and $\dot \theta$ one obtains:
 \beq\label{eq60}\begin{array}{l}\fracj{\partial \beta^{a}}{\partial \dot z}=-\fracj{1}{V},\;\;\;\fracj{\partial \beta^{a}}{\partial \dot \theta}=\fracj{d_r}{V},\;\;
 \fracj{\partial \alpha^{a}}{\partial \dot z}=-\fracj{1}{V},\;\;\;
 \fracj{\partial \alpha^{a}}{\partial \dot \theta}=-\fracj{d_f}{V}.\end{array}\eeq

- then for the derivatives of the modulus of the apparent velocities at $R$ and $F$:
 \beq\label{70}{\begin{array}{l}\fracj{\partial \vert\vert {\bold v}^{a}(R)\vert\vert^2}{\partial \dot z}=0,\;\;\;\fracj{\partial \vert\vert {\bold v}^{a}(R)\vert\vert^2}{\partial \dot \theta}=2h_rV,\\\\
 \fracj{\partial \vert\vert {\bold v}^{a}(F)\vert\vert^2}{\partial \dot z}=0,\;\;\;\fracj{\partial \vert\vert {\bold v}^{a}(F)\vert\vert^2}{\partial \dot \theta}=2h_fV.\end{array}}\eeq
 The next step consists in the linearization of the forces applied to the boat through the two foils (main and rudder). Let us make explicit the expression of the heaving force and the one of the pitching moment at the center of mass of the boat:
 \beq\label{eq80}\hskip -.5cm{\begin{array}{l}{\mathcal F}_z=-Mg+\fracj{\varrho}{2} {\Big [}S_r\vert\vert {\bold v}^{a}(R)\vert\vert^2c_z(\beta^{a}+\theta+\delta_r)+S_f\vert\vert {\bold v}^{a}(F)\vert\vert^2c_z(\alpha+\theta+\delta_f){\Big ]},\\\\
 {\mathcal M}_y=\fracj{\varrho}{2}{\Big [}-\vert\vert {\bold v}(R)\vert\vert^2S_rd_rc_z(\beta^{a}+\theta+\delta_r)+\vert\vert {\bold v}(F)\vert\vert^2S_fd_fc_z(\alpha^{a}+\theta+\delta_f){\Big ]}.
 \end{array}}\eeq
 The formal linearization around $\theta=0,\;z=z_0$ (which is arbitrary) $\alpha=\alpha_0,\;\beta=\beta_0$, leads to (terms of order zero disappear because of the values chosen for $\alpha_0$ and $\beta_0$):
 \beq\label{eq80}{ \begin{array}{l}{\mathcal F}_z^L=\fracj{\partial {\mathcal F}_z}{\partial z}z+\fracj{\partial {\mathcal F}_z}{\partial \dot z}\dot z+\fracj{\partial {\mathcal F}_z}{\partial \theta}\theta+\fracj{\partial {\mathcal F}_z}{\partial \dot \theta}\dot \theta+\fracj{\partial{\mathcal F}_z}{\partial \delta_r}\delta_r+\fracj{\partial{\mathcal F}_z}{\partial \delta_f}\delta_f,\\\\
 {\mathcal M}_z^L=\fracj{\partial {\mathcal M}_z}{\partial z}z+\fracj{\partial {\mathcal M}_z}{\partial \dot z}\dot z+\fracj{\partial {\mathcal M}_z}{\partial \theta}\theta+\fracj{\partial {\mathcal M}_z}{\partial \dot \theta}\dot \theta+\fracj{\partial{\mathcal M}_z}{\partial \delta_r}\delta_r+\fracj{\partial {\mathcal M}_z}{\partial \delta_f}\delta_f,
 \end{array}}\eeq
 with the following values (for memory $\fracj{\partial c_z}{\partial \beta}(\beta_0)=\fracj{\partial c_z}{\partial \alpha}(\alpha_0)=18/\pi$):
 \beq\label{eq90}\hskip-.4cm{\begin{array}{l}\fracj{\partial {\mathcal F}_z}{\partial z}=0,\;\fracj{\partial {\mathcal M}_y}{\partial z}=0,\;\;

 \fracj{\partial {\mathcal F}_z}{\partial \delta_r}=\fracj{\varrho V^2}{2}S_r\fracj{\partial c_z}{\partial \beta}(\beta_0),\;\; \fracj{\partial {\mathcal F}_z}{\partial \delta_f}=\fracj{\varrho V^2}{2}S_f\fracj{\partial c_z}{\partial \alpha}(\alpha_0),
 \\\\
 \fracj{\partial {\mathcal F}_z}{\partial \dot z}=-\fracj{\varrho V}{2}{\Big [}S_r\fracj{\partial c_z}{\partial \beta}(\beta_0)+ S_f\fracj{\partial c_z}{\partial \alpha}(\alpha_0){\Big ]},
 \\\\
 \fracj{\partial {\mathcal M}_y}{\partial \dot z}=\fracj{\varrho V}{2}{\Big [}-S_rd_r\fracj{\partial c_z}{\partial \beta}(\beta_0)+S_fd_f\fracj{\partial c_z}{\partial \alpha}(\alpha_0){\Big]},
 ,\\\\
  \fracj{\partial {\mathcal F}_z}{\partial \theta}=\fracj{\varrho V^2}{2}{\Big [}S_r\fracj{\partial c_z}{\partial \beta}(\beta_0)+S_f\fracj{\partial c_z}{\partial \alpha}(\alpha_0){\Big ]},\\\\
  \fracj{\partial {\mathcal M}_y}{\partial \theta}=\fracj{\varrho V^2}{2}{\Big[}-S_rd_r\fracj{\partial c_z}{\partial \beta}(\beta_0)+S_fd_f\fracj{\partial c_z}{\partial \alpha}(\alpha_0){\Big ]}.
 \\\\
 \fracj{\partial {\mathcal F}_z}{\partial \dot \theta}=\varrho V{\Big [}S_rh_rc_z(\beta_0)+S_fh_fc_z(\alpha_0)+\fracj{S_rd_r}{2}\fracj{\partial c_z}{\partial \beta}(\beta_0)-\fracj{S_fd_f}{2}\fracj{\partial c_z}{\partial \alpha}(\alpha_0){\Big ]},
 \\\\
  \fracj{\partial {\mathcal M}_y}{\partial \dot \theta}\hskip-.1cm=\varrho V{\Big [} S_fd_fh_fc_z(\alpha_0)-S_rd_rh_rc_z(\beta_0)-\fracj{S_rd_r^2}{2}\fracj{\partial c_z}{\partial \beta}(\beta_0)-\fracj{S_fd_f^2}{2}\fracj{\partial c_z}{\partial \alpha}(\alpha_0){\Big ]}. \end{array}}\eeq
 We now introduce four matrices in order to define the dynamical model. One denoted by ${\mathcal M}$, is the inertia, the second one ${\mathcal C}$ couples the gyroscopic effect (odd part of ${\mathcal C}$) and the hydrodynamic damping (the symmetrical part of ${\mathcal C}$ which is not necessarily positive), the third one -say ${\mathcal K}$- is the stiffness (not necessarily neither symmetrical nor positive) and the last one ${\mathcal B}$ is the so-called {\it control tuner}.
 \\
 \beq\label{eq100}{\begin{array}{l}\begin{array}{cc}
 {\mathcal M}=\left (\begin{array}{l}\begin{array}{cc}M&0
 \\
 0&J\end{array}\end{array}\right )
 &
  {\mathcal C}=\left (\begin{array}{l}\begin{array}{cc}-\fracj{\partial {\mathcal F}_z}{\partial \dot z}&-\fracj{\partial {\mathcal F}_z}{\partial \dot \theta}\\\\-\fracj{\partial {\mathcal M}_y}{\partial \dot z}&-\fracj{\partial {\mathcal M}_y}{\partial \dot \theta}\end{array}\end{array}\right )\end{array}
  \\
  \begin{array}{cc}{\mathcal K}=\left (\begin{array}{l}\begin{array}{cc}-\fracj{\partial {\mathcal F}_z}{\partial  z}&-\fracj{\partial {\mathcal F}_z}{\partial \theta}\\\\-\fracj{\partial {\mathcal M}_y}{\partial  z}&-\fracj{\partial {\mathcal M}_y}{\partial \theta}\end{array}\end{array}\right )
  &
  \begin{array}{cc} {\mathcal B}=\left (\begin{array}{l}\begin{array}{cc}\fracj{\partial {\mathcal F}_z}{\partial  \delta_r}&\fracj{\partial {\mathcal F}_z}{\partial \delta_f}\\\\\fracj{\partial {\mathcal M}_y}{\partial  \delta_r}&\fracj{\partial {\mathcal M}_y}{\partial \delta_f}\end{array}\end{array}\right )\end{array}\end{array}\end{array}}
 \eeq
  The vectors representing the state variables, the control and the external perturbations are denoted by:
 $$ \hskip-.3cm\begin{array}{cccc}\hbox{Degrees of freedom: } X \hskip-.1cm= \hskip-.1cm\left (\begin{array}{l}z
 \\\\
 \theta
 \end{array}\right ),&
  \hskip-.2cm\hbox{Control: }
 u \hskip-.1cm= \hskip-.1cm \left (\begin{array}{l}u_r \\\\ u_f
 \end{array}\right )&
  \hskip-.2cm\hbox{Perturbations: }
 {\mathcal F} \hskip-.1cm= \hskip-.1cm\left (\begin{array}{l}f_1\\\\ f_2\end{array}\right )\end{array}$$
 \\
 and all the components of these vectors depend on time. Finally the equation of the movement is:
 \beq\label{eq100bis}{\mathcal M}\ddot X+{\mathcal C}\dot X+{\mathcal K}X={\mathcal B}u+{\mathcal F},\eeq
 with initial conditions on $X(0$ and $\dot X(0)$ which represent for instance the effect of a large wave (on $\dot \theta$...) or a brusk change in the wind direction or its intensity (on $\dot z$...).
 \end{document}